\newtheorem{theorem}{\rm\bf Theorem}[section]
\newtheorem{lemma}[theorem]{\rm\bf Lemma}
\newtheorem{corollary}[theorem]{\rm\bf Corollary}
\newtheorem*{theorem*}{Theorem}
\newtheorem*{theorem 1}{\rm\bf Proposition 1}
\newtheorem*{theorem 2}{\rm\bf Proposition 2}
\theoremstyle{definition}
\newtheorem{definition}[theorem]{\rm\bf Definition}
\theoremstyle{remark}
\newtheorem{remark}[theorem]{\rm\bf Remark}
\def\XXint#1#2#3{{\setbox0=\hbox{$#1{#2#3}{\int}$}
     \vcenter{\hbox{$#2#3$}}\kern-.5\wd0}}
\def\half#1#2{\begin{matrix}\frac{#1}{#2}\end{matrix}}
\def\M{\mathscr{M}}
\def\R#1{\mathbb{R}^{#1}}
\def\scal#1#2{\langle #1; #2 \rangle}
\DeclareMathOperator{\supp}{supp}
\begin{document}

\title[Sharp pointwise gradient estimates for Riesz potentials]{Sharp pointwise gradient estimates for Riesz potentials with a bounded density}

\author{Vladimir G. Tkachev}
\address{Department of Mathematics\\, Link\"oping University\\ 58183,\\ Sweden}

\email{vladimir.tkatjev@liu.se}

\dedicatory{In memory of Sasha Vasil'ev\\Friend, Colleague, Mathematician.}

\begin{abstract}
We establish sharp inequalities for the Riesz potential and its gradient in $\R{n}$ and indicate their usefulness for potential analysis, moment theory and other applications.
\end{abstract}

\keywords{
Riesz potentials, Exponential transform, $L$-problem of moments. Subharmonic functions, Cauchy's inequality, Symmetry of domains and solutions}

\subjclass[2000]{
Primary 31B15; Secondary 31C05, 44A12, 35J62}

\maketitle

\section{Introduction}
Given a measurable function  $f(x)$ on $\R{n}$, its Riesz potential of order $0\le \alpha<n$ is defined\footnote{We use a slightly different from the standard normalization} by
$$
(\mathcal{I}_\alpha \rho)(y) =\int_{\R{n}}\frac{\rho(x)}{|y-x|^{n-\alpha}}\; d_\omega x,
$$
where $d_\omega x$ denotes the $n$-dimensional Lebesgue measure on $\R{n}$ normalized by
$$
d_\omega x=\frac{1}{\omega_n}\, dx,
$$
$\omega_n:=\pi^{n/2}/\Gamma(\half{n}{2}+1)$ being the  $n$-dimensional volume of the unit ball in $\R{n}$.
Let $\rho(x)$, $0\leq \rho\leq 1$, be a measurable function with compact support in $\R{n}$ and let
\begin{equation*}\label{intro1}
E_\rho(y)=\exp\left[-2\int_{\R{n}}
\frac{\rho(x)}{|y-x|^{n}}d_\omega x\right]=e^{-2(\mathcal{I}_\alpha \rho)(y)}, \qquad y\in \R{n}\setminus\supp \rho,
\end{equation*}
be the exponential transform of $\rho$ \cite{GP2003}, \cite{GuPubook}. Then $E_\rho(y)$ is the exponential of a Riesz potential of first nonintegrable index ($\alpha=0$).  If $\rho=\chi_D$ is a characteristic function of a bounded domain $D\subset\R{n}$,  Putinar and Gustafsson \cite{GP2003} established that $E_{\chi_{D}}(x)$ is superharmonic in the complement of $D$ and it tends to zero at smooth points of the boundary $\partial D$. It has also been conjectured in \cite{GP2003} that in fact a  stronger result holds: for any density $\rho(x)$, the function
\begin{equation}\label{e-tilda1}
\left\{%
\begin{array}{ll}
    \ln (1-E_\rho(y)), & \hbox{if \quad $n=2$,} \\
    \\
    \frac{1}{n-2}(1-E_\rho(y))^{(n-2)/n}, & \hbox{if \quad $n\geq 3$,} \\
\end{array}%
\right.
\end{equation}
is \textit{subharmonic everywhere outside} $\supp \rho$. The conjecture has been settled in the affirmative by the author in \cite{Tk2005}. A key ingredient in the proof of the subharmonicity was the following sharp inequality.

\begin{theorem}[Corollary~2.2. in \cite{Tk2005}]\label{th:11}
For any  density function $0\le \rho(x)\le 1$,
$0\not\in\supp\rho$, the inequality holds
\begin{equation}\label{var}
\left(\int_{\R{n}}\frac{x_1\rho(x)}{|x|^n}\; d_\omega x\right)^2\leq
\M_n\left(\int_{\R{n}}\frac{\rho(x)}{|x|^n}\; d_\omega x\right)
{\displaystyle\int_{\R{n}}\frac{\rho(x)}{|x|^{n-2}} \;d_\omega x},
\end{equation}
where  $\M_n(t)$ is the unique solution of the  initial problem
\begin{equation}\label{equ:diff}
\M_n'(t)=1-\M_n^{2/n}(t), \qquad \M(0)=0.
\end{equation}
The inequality \eqref{var} is sharp and the equality holds when  $\rho(x)$ is the
characteristic function of a ball $B$ with a center on the $x_1$-axes and $0\not\in \overline{B}$.
\end{theorem}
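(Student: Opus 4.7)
My plan is to prove \eqref{var} by recasting it as a constrained variational problem whose extremizers can be identified and computed explicitly. Write
$$
A(\rho)=\int_{\R{n}}\frac{x_1\rho}{|x|^n}d_\omega x,\quad B(\rho)=\int_{\R{n}}\frac{\rho}{|x|^n}d_\omega x,\quad C(\rho)=\int_{\R{n}}\frac{\rho}{|x|^{n-2}}d_\omega x.
$$
Cauchy--Schwarz gives only the weaker bound $A^2\le BC$; improving it to \eqref{var} must exploit the pointwise constraint $\rho\le1$. I would define $\Phi(t):=\sup\{A^2/C:0\le\rho\le 1,\ 0\notin\supp\rho,\ B=t\}$ and prove $\Phi=\M_n$, which is equivalent to \eqref{var}.

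The extremizers are located by a standard bang-bang argument. Since $A$, $B$, $C$ are all linear in $\rho$, maximizing $A$ at fixed $(B,C)$ via the two-parameter Lagrangian
$$
A-\lambda B-\mu C=\int_{\R{n}}\frac{x_1-\lambda-\mu|x|^2}{|x|^n}\rho\,d_\omega x
$$
forces any maximizer to be $\rho=\chi_D$ with $D=\{x_1>\lambda+\mu|x|^2\}$. Completing the square shows that for $\mu>0$ and $0<\lambda<1/(4\mu)$ this set is a ball $B_r(a)\subset\R{n}\setminus\{0\}$ on the positive $x_1$-axis with $a=1/(2\mu)$ and $r=\sqrt{a^2-2\lambda a}\in(0,a)$, and the correspondence bijects admissible Lagrange parameters onto the admissible balls. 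So it suffices to verify \eqref{var} for $\rho=\chi_{B_r(a)}$.

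Both $|x|^{-(n-2)}$ and $x_1|x|^{-n}=-\tfrac{1}{n-2}\partial_{x_1}|x|^{-(n-2)}$ are harmonic on $\R{n}\setminus\{0\}$ (trivially so for $n=2$), so the mean value theorem on $B_r(a)$ gives at once
$$
A(\chi_{B_r(a)})=\frac{r^n}{a^{n-1}},\qquad C(\chi_{B_r(a)})=\frac{r^n}{a^{n-2}},\qquad \frac{A^2}{C}=s^n,\quad s:=\frac{r}{a}\in(0,1).
$$
Because $B$ is scale-invariant on extremizers, $B=B(s)$. Computing this integral in spherical coordinates based at the origin produces
$$
B(s)=\frac{|S^{n-2}|}{\omega_n}\int_0^{\arcsin s}\log\frac{\cos\psi+\sqrt{s^2-\sin^2\psi}}{\cos\psi-\sqrt{s^2-\sin^2\psi}}\sin^{n-2}\psi\,d\psi,
$$
and differentiating in $s$ (the boundary term drops since the logarithm vanishes at $\psi=\arcsin s$), then substituting $t=\sin\psi$ and $t=s\sin\phi$, reduces the derivative to a beta integral that collapses, via $\Gamma(n/2+1)=(n/2)\Gamma(n/2)$, to $dB/ds=ns^{n-1}/(1-s^2)$. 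Defining $\M_n$ implicitly by $\M_n(B(s))=s^n$ then yields
$$
\M_n'(B(s))=\frac{d(s^n)/ds}{dB/ds}=1-s^2=1-\M_n(B(s))^{2/n},\qquad \M_n(0)=0,
$$
which is precisely \eqref{equ:diff}. Sharpness is automatic since equality holds along the entire family $\chi_{B_r(a)}$.

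The main technical obstacle I anticipate is the derivation of $dB/ds$: one must justify differentiation under the integral across the $s$-dependent angular cap and carry out the beta-function bookkeeping so that the geometric constants cancel into the bare dimension factor $n$. The variational reduction is otherwise routine, but some care is needed to confirm that the parameter range $\mu>0$, $0<\lambda<1/(4\mu)$ really exhausts the relevant extremizers and excludes degenerate configurations such as half-spaces ($\mu=0$) or balls tangent to the origin ($\lambda=0$), which would contradict $0\notin\supp\rho$.
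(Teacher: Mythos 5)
Your strategy --- reduce to off-center balls by a bang-bang argument, then check that the one-parameter family of balls generates the ODE \eqref{equ:diff} --- is in essence the paper's strategy (the paper proves the general Theorem~\ref{th:N} by such a reduction and recovers \eqref{var} at $\alpha=2$ in Corollary~\ref{cor:alpha2}), and your explicit computations on the ball family are all correct: the mean-value evaluation giving $A=r^n/a^{n-1}$, $C=r^n/a^{n-2}$, the spherical-coordinate formula for $B(s)$, and $dB/ds=ns^{n-1}/(1-s^2)$ agree with the paper's \eqref{spheric1}--\eqref{spheric2} after the change of parameter $t^{-2}=1-s^2$, under which $\M_n=s^n=(1-t^{-2})^{n/2}$. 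Your derivation of these facts via harmonicity and a direct beta-integral is more elementary and self-contained than the paper's, which extracts them from a co-area/hypergeometric apparatus built for general $\alpha$.

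The one genuine gap is the sentence ``forces any maximizer to be $\rho=\chi_D$.'' This presupposes (i) that a maximizer of $A$ subject to $B=u$, $C=v$, $0\le\rho\le1$ exists, and (ii) that a maximizer must optimize the unconstrained Lagrangian for some multipliers $(\lambda,\mu)$ of the correct signs. Neither is addressed, and neither is free: the admissible class (compact support avoiding the origin) is not weak-$*$ closed, the weights are singular at the origin and the domain is unbounded, and a KKT statement in $L^\infty$ with two equality constraints needs a separating-hyperplane argument. The repair is precisely the paper's Bathtub step, which removes the need for a maximizer altogether: first show (as in Step~1 of Section~\ref{sec:lemma}) that the pair $(u,v)=(B(\rho),C(\rho))$ is always realized by some ball $B_r(a)$ avoiding the origin, read off $(\lambda,\mu)$ from that ball, and compare any competitor directly. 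Since $x_1-\lambda-\mu|x|^2>0$ exactly on $B_r(a)$ and $0\le\rho\le1$, one has pointwise $\bigl(x_1-\lambda-\mu|x|^2\bigr)\rho(x)\le \bigl(x_1-\lambda-\mu|x|^2\bigr)\chi_{B_r(a)}(x)$, hence
$$
A(\rho)-\lambda u-\mu v=\int_{\R{n}}\frac{x_1-\lambda-\mu|x|^2}{|x|^n}\,\rho\; d_\omega x\le\int_{B_r(a)}\frac{x_1-\lambda-\mu|x|^2}{|x|^n}\; d_\omega x=A(\chi_{B_r(a)})-\lambda u-\mu v,
$$
so $A(\rho)\le A(\chi_{B_r(a)})$, and the bound for $-A(\rho)$ follows by reflecting $x_1\to-x_1$. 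With this one-line substitution your argument closes; note that it also subsumes the paper's separate Step~3 for densities straddling the hyperplane $x_1=0$, since the comparison never assumes $\supp\rho\subset\{x_1>0\}$.
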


Inequality \eqref{var} is remarkable in many respects. First notice that it implies a sharp gradient estimate for the Newtonian potential
\begin{equation}\label{gradestimate}
\frac{1}{(n-2)^2}|\nabla \mathcal{I}_2 \rho(x)|^2\le \M(\mathcal{I}_0\rho(x))\cdot \mathcal{I}_2\rho(x),
\end{equation}
or
\begin{equation}\label{equ:potential}
\frac{1}{n-2}|\nabla U_\rho(y)|^2\leq U_\rho(y)\cdot \M_n\bigl(-\half{1}{2}\ln E_\rho(y)\bigr),\qquad\forall y
\not\in \supp \rho,
\end{equation}
for the Newtonian potential
$$
U_\rho(y)=\frac{1}{n-2}\int_{\R{n}} \frac{\rho(x)}{|y-x|^{n-2}} d_\omega x
$$
with a bounded density $\rho$. Since $\M_n(t)<1$, \eqref{equ:potential} yields a  `truncated version'
\begin{equation*}\label{equ:GP}
|\nabla U_\rho(x)|<\sqrt{(n-2) U_\rho(x)}, \qquad x \not\in \supp \rho.
\end{equation*}
It is well-known, see for example Proposition~3.1.7 in \cite{AdamsHed}, that for $f\in L^p(\R{n})$, $1\le p<\infty$, there exists a constant $A$ such that
\begin{equation}\label{gradhedberg}
|\nabla I_\alpha f(x)|^\alpha\le A\cdot Mf(x)\cdot (I_\alpha(f)(x))^{\alpha-1},
\end{equation}
where
$$
Mf(x)=\sup_{r>0}\frac{1}{r^n\omega_n}\int_{B(x,r)}|f(y)|\,dy
$$
is the Hardy-Littlewood maximal function of $f$. As Adams remarks in \cite{Adams}, while the maximal function is an important tool for estimates involving $L^p$ measures $f$, it is not a sharp tool for analyzing their pointwise behaviour. Some generalizations involving the Hardy-Littlewood maximal function for a complex order $\alpha\in \mathbb{C}$ can be found in \cite{MazShap99}.
This estimate also appears  for the Poisson equation and quasilinear equations, see \cite{Mingione11}, \cite{Mingione14}, see also \cite{GargSpector}. In fact, a straightforward application of Cauchy's inequality yields just
\begin{equation}\label{Cauchy}
\left(\int_{\R{n}}\frac{x_1\rho(x)}{|x|^n}\; d_\omega x\right)^2\leq
\int_{\R{n}}\frac{\rho(x)}{|x|^n}\; d_\omega x
\int_{\R{n}}\frac{\rho(x)}{|x|^{n-2}} \;d_\omega x
\end{equation}
which is optimal in the class of \textit{arbitrary} (not uniformly bounded) nonnegative measurable densities $\rho$  and the equality attains asymptotically for a suitable approximation of a $\delta$-function distribution with a single-point support on the $x_1$-axes.

In this respect, \eqref{var} is a considerable refinement of \eqref{Cauchy} for \textit{uniformly bounded} densities $0\le \rho\in L^{\infty}(\R{n})$. Since $\M_n<1$ one readily obtains from \eqref{var} that for any  $0\le \rho\in L^{\infty}(\R{n})$ the (truncated)  Cauchy inequality holds:
\begin{equation}\label{var1}
\left(\int_{\R{n}}\frac{x_1\rho(x)}{|x|^n}\; d_\omega x\right)^2<
\|\rho\|_{L^\infty(\R{n})}\,\int_{\R{n}}\frac{\rho(x)}{|x|^{n-2}} \;d_\omega x,
\end{equation}

Observe that the sharp inequality \eqref{var} has  no longer symmetry of  Cauchy's inequality, see \eqref{Cauchy}. This symmetry breaking  can appropriately be explained in the moment problem context. Namely,  \eqref{var} can be thought as a natural extension of the Markov  inequalities in the  $L$-problem  \cite{AkhKrein}, \cite{KreinNud} for the critical negative powers. Recall that given $L>0$, the $L$-problem of moments  concerns the existence of a density function $0\le \rho\le L$ with a given sequence of the power moments
\begin{equation}\label{sseq}
s_k(\rho):= s_k=\int_{I} \rho(x)x^k dx, \quad k=0,1,2,\ldots,
\end{equation}
where $I\subset \R{}$ is an arbitrary fixed (finite or infinite) interval.
By a celebrated result of A.A.~Markov, the solvability of the $L$-problem is equivalent to the solvability of the corresponding classical moment problem for
$$
a_k=\int_{I} x^kd\mu(x),
$$
where $d\mu(x)$ is a positive measure, and the correspondence is given by the (one-dimensional) {exponential transform}
\begin{equation}\label{exp:def}
1-\exp(-\frac{1}{L}\sum_{k=0}^{\infty}\frac{s_k}{z^{k+1}})=\sum_{k=0}^{\infty}\frac{a_k}{z^{k+1}},
\end{equation}
see \cite[p.~72]{AkhKrein} or \cite[p.~243]{KreinNud} for more details. \cite{GuHeMiPu}. Setting $L=1$ and $I=[0,\infty)$ in \eqref{sseq}, the solvability of the corersponding $L$-moment problem is equivalent to the solvability of the Stieltjes problem for the sequence $\{a_k\}_{k\ge0}$ defined by \eqref{exp:def} which is, in its turn, is  equivalent to the nonnegativity of the Hankel determinants sequence $\Delta_m:=\det (a_{i+j})_{i,j=0}^{m}\geq 0$ and $\Delta_m':=\det (a_{i+j+1})_{i,j=0}^{m}\geq 0$, $m\ge0$. For example, $\Delta'_1\ge0$ readily yields
\begin{equation}\label{Mar1}
s_0^2\le 12(s_0s_2-s_1^2).
\end{equation}
Furthermore, the inversion of the variable $x\to 1/x$ in \eqref{sseq} implies   a correspondence between the negative power moments $s_m$ for $m=-2,-3,\ldots$ and the classical ones:
\begin{equation}\label{sseq1}
s_k(\rho)=s_{-k-2}(\widetilde\rho),\qquad \widetilde\rho(t)=\rho(x^{-1}).
\end{equation}
This implies the classical Markov inequalities for all power moments $s_m$ except for the critical exponent $m=-1$.

In this respect, in the one-dimensional case \eqref{var} provides a novel inequality for power $L$-moments involving the critical exponent $s_{-1}$. Indeed, in the notation of Theorem~\ref{th:11}  one has $\M_1(t)=\tanh t$ and $d_\omega x=\half12 dx$, hence \eqref{var} yields
\begin{equation}\label{tanh}
\frac12\left(\int_{0}^\infty \rho(x) dx\right)^2\le \tanh \left(\frac12\int_{0}^\infty \frac{\rho(x)}{x} dx\right)\cdot \int_{0}^\infty x\rho(x) dx
\end{equation}
for any density function $0\le \rho\le1$. In the moment  notation this yields a sharp inequality
\begin{equation}\label{Lmoment1}
s_0^2\le s_1\tanh s_{-1}.
\end{equation}
Remark that in contrast to the algebraic character of the classical Markov inequalities \cite{CurtoF}, \eqref{Lmoment1} has a different, transcentental nature. The analogous two-dimensional $L$-problem is much less explored, recent works point out some direct applications of this problem to tomography, geophysics, the problem in particular has to do with the distribution of pairs of random variables or the logarithmic potential of a planar domain, see \cite{GuPuSaSt}, \cite{GuPubook}, \cite{Lass}, \cite{GusVas}.

%

\section{Main results}

In this paper we  extend  \eqref{var} on the Riesz potentials of a general index. Then we have for its gradient
\begin{equation}\label{Hgrad}
\frac{1}{n-\alpha}(\nabla \mathcal{I}_\alpha \rho)(y) =\int_{\R{n}}\frac{(y-x)\rho(x)}{|y-x|^{n+2-\alpha}}\; d_\omega x.
\end{equation}
When $y$ is fixed it is natural to assume that $y=0$, hence after a suitable orthogonal transformation of $\R{n}$ the above integrals amount respectively to
\begin{align*}
\mathcal{F}_\alpha\rho&:=\int_{\R{n}}\frac{\rho(x)}{|x|^{n-\alpha}}\; d_\omega x ,\\
\mathcal{H}_\alpha\rho&:=\int_{\R{n}}\frac{x_1\rho(x)}{|x|^{n+2-\alpha}}\; d_\omega x,
\end{align*}
with a new density function $\rho$. We are interested in the gradient estimates, i.e. those involving both $\mathcal{F}_\alpha\rho$ and $\mathcal{H}_\alpha\rho$. In this paper, we consider the following {variational  problem}.

\begin{definition}Given  $u,v>0$, define
\begin{equation}\label{test0}
\mathscr{N}_\alpha(u,v):=\sup_{\rho}\left\{|\mathcal{H}_\alpha\rho|^2: \,\,\mathcal{F}_\alpha\rho=u,
\,\,\,\,\mathcal{F}_{\alpha-2}\rho=v\right\}
\end{equation}
where the supremum is taken over all measurable functions $0\le \rho\le 1$ with support outside of the origin. We refer to such a $\rho(x)$ as an \textit{admissible} density function. A pair $(u,v)\subset\R{2}_{\ge0}$ is said to be \textit{admissible} for the variational problem \eqref{test0} if there exists an admissible density function $\rho$ such that $\mathcal{F}_\alpha\rho=u$ and $\mathcal{F}_{\alpha-2}\rho =v$.
\end{definition}

It is easy to see that $\mathscr{N}_\alpha(u,v)$ is well-defined and finite for any $\alpha$ and any admissible pair $(u,v)$. Indeed, it follows from Cauchy's inequality that
\begin{equation}\label{Nuv}
\mathscr{N}_\alpha(u,v)\le uv, \qquad \forall \alpha\in \R{}.
\end{equation}
We point out, however, that the estimate \eqref {Nuv} provide a correct approximation only when $u$ and $v$ are infinitesimally small.

By virtue of \eqref{Hgrad},  $\mathscr{N}_\alpha(u,v)$ implies the following pointwise gradient estimate on the Riesz potential $\mathcal{I}_\alpha \rho$ by means of $\mathcal{I}_\alpha \rho$ itself and the contiguous potential $\mathcal{I}_{\alpha-2} \rho$.

\begin{corollary}
In the above notation, the following pointwise estimate holds:
\begin{equation}\label{gradientest}
\frac{1}{n-\alpha}|\nabla \mathcal{I}_\alpha\rho|\le \sqrt{\mathscr{N}_\alpha\bigl(\mathcal{I}_\alpha\rho,\mathcal{I}_{\alpha-2}\rho \bigr)},
\end{equation}
and the inequality is sharp.
\end{corollary}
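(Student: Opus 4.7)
The plan is to reduce the estimate to a direct appeal to the definition \eqref{test0} of $\mathscr{N}_\alpha$ by translating the point of evaluation to the origin and rotating coordinates so that $\nabla\mathcal{I}_\alpha\rho(y)$ points along the $x_1$-axis. There is no deep analytic content involved: the corollary is essentially built into how $\mathscr{N}_\alpha$ has been set up, and the only thing worth checking is that these rigid motions preserve both admissibility and the values of the radial functionals $\mathcal{F}_\alpha$ and $\mathcal{F}_{\alpha-2}$.

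First I would fix $y\notin\supp\rho$ and set $\rho_y(x):=\rho(x+y)$. This $\rho_y$ is admissible, and translation invariance of the Riesz kernels gives $\mathcal{F}_\alpha\rho_y=\mathcal{I}_\alpha\rho(y)$ and $\mathcal{F}_{\alpha-2}\rho_y=\mathcal{I}_{\alpha-2}\rho(y)$, while \eqref{Hgrad} yields
$$\mathbf{g}:=\frac{1}{n-\alpha}\nabla\mathcal{I}_\alpha\rho(y)=-\int_{\R{n}}\frac{x\,\rho_y(x)}{|x|^{n+2-\alpha}}\,d_\omega x.$$
If $\mathbf{g}=0$ the inequality is trivial, so assume otherwise and pick an orthogonal $O\in O(n)$ with $O\mathbf{g}$ parallel to $e_1$. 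Setting $\tilde\rho(x):=\rho_y(Ox)$, the substitution $x\mapsto Ox$ preserves the radial integrals (since $|Ox|=|x|$ and $|\det O|=1$), so $\mathcal{F}_\alpha\tilde\rho=\mathcal{F}_\alpha\rho_y$ and $\mathcal{F}_{\alpha-2}\tilde\rho=\mathcal{F}_{\alpha-2}\rho_y$; expanding $(Ox)_1=(O^Te_1)\cdot x$ in the integrand of $\mathcal{H}_\alpha\tilde\rho$ then produces $|\mathcal{H}_\alpha\tilde\rho|=|\mathbf{g}|$.

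Since $\tilde\rho$ is admissible, plugging it into \eqref{test0} yields at once
$$|\mathbf{g}|^2=|\mathcal{H}_\alpha\tilde\rho|^2\le \mathscr{N}_\alpha\bigl(\mathcal{F}_\alpha\tilde\rho,\mathcal{F}_{\alpha-2}\tilde\rho\bigr)=\mathscr{N}_\alpha\bigl(\mathcal{I}_\alpha\rho(y),\mathcal{I}_{\alpha-2}\rho(y)\bigr),$$
which is precisely \eqref{gradientest}. For sharpness, given any admissible pair $(u,v)$ the supremum in \eqref{test0} furnishes admissible densities $\rho_k$ with $\mathcal{F}_\alpha\rho_k=u$, $\mathcal{F}_{\alpha-2}\rho_k=v$ and $|\mathcal{H}_\alpha\rho_k|^2\to\mathscr{N}_\alpha(u,v)$; each such $\rho_k$ realizes equality in \eqref{gradientest} at $y=0$ up to $o(1)$, so no smaller right-hand side is possible.

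The only step I expect to require any real care is the rotation bookkeeping, i.e.\ confirming that after the change of variables $x\mapsto Ox$ the integral $\mathcal{H}_\alpha\tilde\rho$ has absolute value exactly $|\mathbf{g}|$ rather than a strictly smaller projection. This is pure linear algebra and I do not anticipate it posing any genuine obstacle to the argument.
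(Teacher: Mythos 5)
Your proposal is correct and follows exactly the route the paper intends: the corollary is presented as an immediate consequence of the definition \eqref{test0} after the translation-and-rotation reduction described just before it ("when $y$ is fixed it is natural to assume that $y=0$, hence after a suitable orthogonal transformation..."), which is precisely the bookkeeping you carry out. The only (harmless) quibble is the orientation of the orthogonal map — with $\tilde\rho(x)=\rho_y(Ox)$ one needs $Oe_1$ parallel to $\mathbf{g}$, i.e.\ $O^{T}\mathbf{g}$ parallel to $e_1$ — but this does not affect the argument.
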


Our main result provides an explicit form of the goal function $\mathscr{N}_\alpha(u,v)$.

\begin{theorem}\label{th:N}
Let $n\ge 1$ and $0<\alpha\le 2$. Then the set of admissible pairs coincides with the nonnegative quadrant $\R{2}_{\ge0}$ and for any  $u,v>0$
\begin{equation}\label{Nexplicit}
\mathscr{N}_\alpha(u,v)=u^{2(\alpha-1)/\alpha}\frac{h^2_\alpha(t)} {f_\alpha^{2(\alpha-1)/\alpha}(t)},
\end{equation}
where $t=t(u,v)$ is uniquely determined by the relation
\begin{equation}\label{fft}
f_{\alpha}^{2-\alpha}(t)f^{\alpha}_{\alpha-2}(t)=u^{2-\alpha}v^{\alpha},
\end{equation}
where
\begin{align*}
f_\alpha(t)&=t^{2-n}(t^2-1)^{n/2}F(\half{2-\alpha}{2},\half{2+\alpha}{2};\half{n+2}{2},1-t^2)
\\
h_\alpha(t)&=t^{1-n}(t^2-1)^{n/2}F(\half{2-\alpha}{2}, \half\alpha2;\half{n+2}2,1-t^2),
\end{align*}
and $F([a,b],[c],t)$ is the Gauss hypergeometric function.
\end{theorem}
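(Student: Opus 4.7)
The approach I would take is to identify the extremal densities via a Lagrange/bang-bang argument and then to pass to a one-parameter family of inversion-invariant balls over which the three integrals admit closed-form hypergeometric evaluations.

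\emph{Structure of extremals.} Because $\mathcal{H}_\alpha\rho$, $\mathcal{F}_\alpha\rho$ and $\mathcal{F}_{\alpha-2}\rho$ are all linear in $\rho$ while $0\le\rho\le 1$ is a pointwise constraint, a bang-bang argument with two Lagrange multipliers $\lambda_1,\lambda_2$ shows that any maximizer has the form $\rho^{\ast}=\chi_\Omega$, where
$$
\Omega=\Bigl\{x\in\R{n}\setminus\{0\}:\ \frac{x_1}{|x|^{n+2-\alpha}}>\frac{\lambda_1}{|x|^{n-\alpha}}+\frac{\lambda_2}{|x|^{n-\alpha+2}}\Bigr\}.
$$
Clearing the positive factor $|x|^{n-\alpha+2}$ reduces this to $\Omega=\{x_1>\lambda_1|x|^2+\lambda_2\}$, which for $\lambda_1,\lambda_2>0$ is the open ball $B=B(ae_1,R)$ with $a=1/(2\lambda_1)$ and $R^2=a^2-\lambda_2/\lambda_1$; in particular $0<R<a$, so $0\notin\overline{B}$. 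The residual bang-bang candidates (half-spaces when $\lambda_1=0$, or the exterior of a ball when $\lambda_1<0$) are excluded by finiteness of the objective and the constraints.

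\emph{Reduction to the inversion-invariant family.} For such a ball, set $s:=\sqrt{a^2-R^2}>0$ and $t:=a/s>1$; the rescaling $y\mapsto y/s$ then sends $B(ae_1,R)$ onto the normalized ball $B_t:=B(te_1,\sqrt{t^2-1})$, which is the unique member of its similarity class left invariant by the inversion $y\mapsto y/|y|^2$. Writing $f_\alpha(t):=\mathcal{F}_\alpha\chi_{B_t}$ and $h_\alpha(t):=\mathcal{H}_\alpha\chi_{B_t}$, homogeneity of the Riesz kernels yields
$$
\mathcal{F}_\alpha\chi_B=s^\alpha f_\alpha(t),\qquad \mathcal{F}_{\alpha-2}\chi_B=s^{\alpha-2}f_{\alpha-2}(t),\qquad \mathcal{H}_\alpha\chi_B=s^{\alpha-1}h_\alpha(t).
$$
Eliminating $s$ from the first two equalities gives precisely the defining relation \eqref{fft} for $t=t(u,v)$, and substituting $s^{\alpha-1}=(u/f_\alpha(t))^{(\alpha-1)/\alpha}$ into the third yields \eqref{Nexplicit}. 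The assertion that every $(u,v)\in\R{2}_{>0}$ is admissible, together with the unique solvability of \eqref{fft}, then reduces to strict monotonicity of $t\mapsto f_\alpha^{2-\alpha}(t)f_{\alpha-2}^{\alpha}(t)$ from $0$ to $\infty$, which I would verify from the asymptotics $f_\alpha(t)\sim\const\cdot(t^2-1)^{n/2}$ as $t\to 1^+$ and $f_\alpha(t)\sim\const\cdot t^{\alpha-n}$ as $t\to\infty$.

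\emph{Hypergeometric evaluation.} It remains to compute $f_\alpha(t)$ and $h_\alpha(t)$ in closed form. In spherical coordinates centered at $te_1$ one has $|y|^2=t^2+2tr\cos\phi+r^2$ for $r\in[0,\sqrt{t^2-1}]$; a standard Jacobi/Gegenbauer-type identity reduces the angular integration to a single Gauss hypergeometric in $(r/t)^2$, after which the radial integral in $r$ is evaluated by Euler's integral representation of ${}_2F_1$ and simplified by a Pfaff/Euler quadratic transformation to acquire the argument $1-t^2$ and the prefactor $t^{2-n}(t^2-1)^{n/2}$ claimed in the theorem. The computation for $h_\alpha$ is parallel, the extra factor $y_1=t+r\cos\phi$ shifting one upper parameter by $1$ and yielding the prefactor $t^{1-n}(t^2-1)^{n/2}$. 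The main obstacle I anticipate is precisely this final reduction: collapsing the double series that arises from combining the angular and radial integrations into a balanced ${}_2F_1$ with the symmetric parameters $(\tfrac{2-\alpha}{2},\tfrac{2\pm\alpha}{2};\tfrac{n+2}{2})$ requires a careful orchestration of Euler/Pfaff transformations with contiguity relations, rather than any single classical identity; the Lagrange and monotonicity steps above, by contrast, are routine.
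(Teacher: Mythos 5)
Your outline captures the right geometric picture (extremizers are $x_1$-balls, rescaling reduces everything to the one-parameter family $D(t)$, and eliminating the scale yields \eqref{fft}--\eqref{Nexplicit}), but the two load-bearing steps are not actually secured. First, the Lagrange/bang-bang argument presupposes that a maximizer of \eqref{test0} \emph{exists} and that first-order conditions with two multipliers of the asserted signs hold. Neither is automatic: the admissible class (compactly supported $0\le\rho\le1$ with $0\notin\supp\rho$) is not weak-$*$ compact, and mass can leak toward the origin or to infinity, where the kernels $|x|^{\alpha-n}$ and $|x|^{\alpha-2-n}$ are not integrable over a half-space. The paper avoids existence altogether: it first solves $F_\alpha(\tau,\sigma)=u$, $F_{\alpha-2}(\tau,\sigma)=v$ for a concrete ball $B(\tau_0,\sigma_0)$, then observes that every competitor with the same constraints is a test function for the \emph{single}-constraint problem \eqref{test1} with $d\mu=(\sigma_0^2|x|^{\alpha-n}+|x|^{\alpha-2-n})\,d_\omega x$ and $\phi(x)=x_1/(|x|^2+\sigma_0^2)$, whose superlevel sets are exactly the balls $B(\tau,\sigma_0)$, so the Bathtub principle gives the upper bound directly. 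Moreover this comparison is carried out only for $\supp\rho\subset\{x_1>0\}$; densities charging both half-spaces require the separate monotonicity of $\mathscr{N}^+_\alpha$ in each variable (Step 2) combined with reflection (Step 3), for which your proposal has no substitute. A further concrete error: your asymptotic $f_\alpha(t)\sim\mathrm{const}\cdot t^{\alpha-n}$ is wrong (by \eqref{hyper4}, $f_\alpha(t)\sim\mathrm{const}\cdot t^{\alpha}$ for $\alpha>0$, while $f_{\alpha-2}(t)\to\infty$ only because the half-space integral diverges at the origin), and even with the correct rates the power count for $g=f_\alpha^{2-\alpha}f_{\alpha-2}^{\alpha}$ is indeterminate ($t^{\alpha(2-\alpha)}\cdot t^{\alpha(\alpha-2)}=1$); the paper instead uses the H\"older bound $g\ge f_0^2$ of Lemma~\ref{lem:est} together with \eqref{inf}. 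Also, endpoint asymptotics never give the strict monotonicity needed for the \emph{unique} solvability of \eqref{fft}; that follows because $f_\alpha$ and $f_{\alpha-2}$ are increasing and both exponents $2-\alpha$, $\alpha$ are nonnegative --- the only place the hypothesis $\alpha\le2$ enters.

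Second, the closed-form hypergeometric expressions for $f_\alpha$ and $h_\alpha$ are part of the statement being proved, and you explicitly leave their derivation as an ``anticipated obstacle.'' The paper does not integrate in spherical coordinates at all: it foliates $D(t)$ by the inversion-invariant spheres $\lambda(x)=(|x|^2+1)/2x_1=s$, uses the co-area formula and Stokes' theorem to obtain \eqref{coarea1}, \eqref{coarea2} and \eqref{ident1a}, eliminates to reach the second-order ODE \eqref{ODEp}, and identifies the relevant solution of the resulting hypergeometric equation by the normalization \eqref{coarea3} at $t=1$ plus Kummer transformations. Until either that route or your double-series reduction is actually carried out, the formulas for $f_\alpha$ and $h_\alpha$ in the theorem remain unproved.
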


We are in particular interested in  the \textit{shape structute} of the goal function $\mathscr{N}_\alpha(u,v)$, i.e. how it depends on the variables $u$ and $v$. Combining \eqref{fft} and  \eqref{Nexplicit} yields  the following alternative representation.

\begin{corollary}\label{cor:th21}
Under assumptions of Theorem~\ref{th:N},
\begin{equation}\label{NNalpha}
\mathscr{N}_\alpha(u,v)=uv\cdot \psi(u^{2-\alpha}v^{\alpha}),
\end{equation}
where $\psi(s)$ well-defined by  the parametric representation
$$
\psi(s)=\frac{h_\alpha^2(t)}{f_\alpha(t)f_{\alpha-2}(t)}, \qquad s=f_{\alpha}^{2-\alpha}(t)f^{\alpha}_{\alpha-2}(t).
$$
\end{corollary}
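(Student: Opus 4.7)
The plan is to derive Corollary~\ref{cor:th21} as a structural rearrangement of Theorem~\ref{th:N}. In fact, the product form $\mathscr{N}_\alpha(u,v) = uv\cdot\psi(u^{2-\alpha}v^{\alpha})$ can be predicted prior to any computation by a simple scaling argument: under the dilation $\rho(x)\mapsto\rho(\lambda x)$ with $\lambda>0$, which preserves admissibility, a change of variables yields
\[
\mathcal{F}_\alpha[\rho(\lambda\cdot)]=\lambda^{-\alpha}\mathcal{F}_\alpha\rho,\qquad \mathcal{F}_{\alpha-2}[\rho(\lambda\cdot)]=\lambda^{2-\alpha}\mathcal{F}_{\alpha-2}\rho,\qquad \mathcal{H}_\alpha[\rho(\lambda\cdot)]=\lambda^{1-\alpha}\mathcal{H}_\alpha\rho,
\]
and hence $\mathscr{N}_\alpha(\lambda^{-\alpha}u,\lambda^{2-\alpha}v)=\lambda^{2-2\alpha}\mathscr{N}_\alpha(u,v)$. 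Since the product $uv$ scales by exactly the same factor $\lambda^{2-2\alpha}$ while the combination $u^{2-\alpha}v^{\alpha}$ is invariant, the ratio $\mathscr{N}_\alpha(u,v)/(uv)$ depends on $(u,v)$ only through $s:=u^{2-\alpha}v^{\alpha}$; this function of $s$ is the sought $\psi(s)$.

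The explicit form of $\psi$ is then read off from \eqref{Nexplicit} by an algebraic manipulation. Writing the conjectural identity $uv\,h_\alpha^{2}(t)/(f_\alpha(t)f_{\alpha-2}(t))=u^{2(\alpha-1)/\alpha}h_\alpha^{2}(t)/f_\alpha^{2(\alpha-1)/\alpha}(t)$, I would cancel $h_\alpha^2(t)$ and rearrange to
\[
f_\alpha^{(2-\alpha)/\alpha}(t)\,f_{\alpha-2}(t)=u^{(2-\alpha)/\alpha}\,v,
\]
and upon raising to the $\alpha$-th power this becomes precisely the defining relation \eqref{fft}, namely $f_\alpha^{2-\alpha}(t)f_{\alpha-2}^{\alpha}(t)=u^{2-\alpha}v^{\alpha}$. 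Hence $\psi(s)=h_\alpha^{2}(t)/(f_\alpha(t)f_{\alpha-2}(t))$ in the parametrization $s=f_\alpha^{2-\alpha}(t)f_{\alpha-2}^{\alpha}(t)$, as asserted.

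Single-valuedness of $\psi$ as a function of $s$ is already embedded in Theorem~\ref{th:N}, which asserts that $t=t(u,v)$ is uniquely determined by \eqref{fft}; equivalently, the map $t\mapsto f_\alpha^{2-\alpha}(t)f_{\alpha-2}^\alpha(t)$ is a bijection from the parameter range onto $\R{}_{>0}$. There is no genuine obstacle in this argument, since the content of the corollary is purely reparametric; its value lies in exposing the scaling invariance of the goal function $\mathscr{N}_\alpha$ that the more cumbersome formula \eqref{Nexplicit} obscures.
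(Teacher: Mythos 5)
Your proof is correct, and its algebraic core --- cancelling $h_\alpha^2(t)$ and observing that the remaining identity $f_\alpha^{(2-\alpha)/\alpha}(t)f_{\alpha-2}(t)=u^{(2-\alpha)/\alpha}v$ is exactly \eqref{fft} raised to the power $1/\alpha$ --- is precisely what the paper means by ``combining \eqref{fft} and \eqref{Nexplicit}''; the paper supplies no further detail, and the single-valuedness of $\psi$ you invoke is exactly the bijectivity of $g(t)=f_\alpha^{2-\alpha}(t)f_{\alpha-2}^{\alpha}(t)$ established in Step~1 of the proof of Theorem~\ref{th:N}. Your preliminary scaling argument is a genuine addition not made explicit in the paper: the homogeneity $\mathscr{N}_\alpha(\lambda^{-\alpha}u,\lambda^{2-\alpha}v)=\lambda^{2-2\alpha}\mathscr{N}_\alpha(u,v)$ follows from the dilation covariance of $\mathcal{F}_\alpha$, $\mathcal{F}_{\alpha-2}$, $\mathcal{H}_\alpha$ (and is implicit in the identities $F_\alpha(\tau,\sigma)=\sigma^{\alpha}f_\alpha(\tau/\sigma)$ of \eqref{HF}), and since $uv$ scales by the same factor while $u^{2-\alpha}v^{\alpha}$ is invariant and separates the dilation orbits in $\R{2}_{>0}$, the shape \eqref{NNalpha} is forced a priori. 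This buys a conceptual explanation of the symmetry breaking of Cauchy's inequality and of why the separable form $u\cdot\M_n(v)$ survives only at $\alpha=2$ (where the invariant degenerates to a function of $v$ alone), whereas the paper's route is a purely formal rearrangement; both are complete.
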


Some further remarks are in order. The condition $\alpha\le 2$ in Theorem~\ref{th:N} is of a technical character. In the complementary case $2<\alpha<n$, the set of admissible pairs is a proper subset of the quadrant $\R{2}_{\ge0}$. The corresponding analysis requires some more care, and will be done elsewhere.

The borderline case $\alpha=2$ corresponds to inequality  \eqref{var} established in \cite{Tk2005} and in the present notation the goal function here becomes
\begin{equation}\label{NM2}
\mathscr{N}_2(u,v)=u\cdot \mathscr{M}_n(v).
\end{equation}
We derive also it as an application of Theorem~\ref{th:N} in Corollary~\ref{cor:alpha2} below. Remarkably, that both Cauchy's inequality estimate \eqref{Nuv} and its sharp version \eqref{NM2}  separate into  one-variable factors. This separable form, however, no longer holds for a general $\alpha$, when the shape of $\mathscr{N}_\alpha$ has a more complicated structure, see \eqref{NNalpha}.

Another interesting case is $\alpha=1$. Under this condition,  the Riesz potentials in the right hand side of \eqref{gradientest} have in fact the same exponent because for $\alpha=1$  the contiguous potential  amounts to $\mathcal{I}_{-1} \rho=\mathcal{I}_{1} \widetilde\rho$, where   $\widetilde\rho$ is  the inversion of $\rho$. A further remarkable feature of this case is that $\mathscr{N}_1(u,v)$ becomes a symmetric function of $u$ and $v$. Indeed, a straightforward analysis  of \eqref{NNalpha} implies that for  $\alpha=1$ the  goal function $\mathscr{N}_1(u,v)$ depends only on the product $uv$. More precisely, we have the following

\begin{theorem}
  \label{th:main}
For any measurable function $0\le \rho(x)\le 1$,
$0\not\in\supp\rho$, the sharp inequality holds
\begin{equation}\label{var1}
\left|\int_{\R{n}}\frac{x_1\rho(x)}{|x|^{n+1}}\; d_\omega x\,\right|\leq
\Phi_n\left(\sqrt{
\int_{\R{n}}\frac{\rho(x)}{|x|^{n-1}}\; d_\omega x
\cdot \int_{\R{n}}\frac{\rho(x)}{|x|^{n+1}} \;d_\omega x}\right),
\end{equation}
where  $\Phi_n(t)$ is the unique solution of the  initial problem
\begin{equation}\label{Phi1}
\Phi_n''=\frac{\Phi_n'(\Phi_n'^2-1)}{(n-1)\Phi_n\Phi_n'+s}, \qquad \Phi_n(0)=0, \,\,\, \Phi_n'(0)=1
\end{equation}
subject to the asymptotic condition
\begin{equation}\label{Phi10}
\lim_{s\to \infty}\frac{\Phi_n(s)}{ \ln s}=\frac{\Gamma(\half{n+2}2)} {\Gamma(\half{n+1}2)\Gamma(\half32)}.
\end{equation}

\end{theorem}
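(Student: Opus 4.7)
The plan is to deduce the theorem from Theorem~\ref{th:N} specialized to $\alpha=1$, and to translate the parametric description of $\mathscr{N}_1(u,v)$ provided there into the ODE characterization of $\Phi_n$.

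First, I observe that when $\alpha=1$ the first two parameters of the Gauss function appearing in $f_\alpha$ are $\{1/2,3/2\}$, while for $\alpha=-1$ they are $\{3/2,1/2\}$; by the symmetry $F(a,b;c;z)=F(b,a;c;z)$ the two functions coincide, $f_1\equiv f_{-1}$. Consequently the implicit relation \eqref{fft} collapses to $f_1(t)^2=uv$, so the parameter $t$ depends on $u,v$ only through the product $uv$, and Corollary~\ref{cor:th21} yields $\mathscr{N}_1(u,v)=h_1(t)^2$ with $t$ determined by $f_1(t)=\sqrt{uv}$. Setting $\Phi_n(s):=h_1(t)$, where $t\ge 1$ is the unique solution of $f_1(t)=s$, Theorem~\ref{th:N} then gives precisely the claimed inequality; sharpness is inherited from the extremal characteristic functions of balls produced in that theorem.

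It remains to check that $\Phi_n$ defined in this way satisfies \eqref{Phi1} together with its boundary data. The chain rule applied to $s=f_1(t)$, $\Phi_n=h_1(t)$ gives
$$
\Phi_n'=\frac{h_1'}{f_1'},\qquad \Phi_n''=\frac{h_1''f_1'-h_1'f_1''}{(f_1')^{3}},
$$
with all primes on $f_1,h_1$ denoting $d/dt$. Clearing denominators, \eqref{Phi1} becomes equivalent to the polynomial identity
$$
(h_1''f_1'-h_1'f_1'')\bigl((n-1)h_1h_1'+f_1f_1'\bigr)=f_1'h_1'\bigl((h_1')^2-(f_1')^2\bigr)
$$
in the single variable $t$. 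Proving this identity is the main technical obstacle. A direct approach combines the second-order Gauss equations satisfied by $F(1/2,3/2;(n+2)/2;1-t^2)$ and $F(1/2,1/2;(n+2)/2;1-t^2)$ with the three-term contiguous relations linking them; a more conceptual route is to recall that $f_1(t),h_1(t)$ are, up to explicit prefactors, the values of $\mathcal{F}_1\rho$ and $\mathcal{H}_1\rho$ on the one-parameter family of extremal balls of Theorem~\ref{th:N}, parametrized by the ratio $t>1$ of the distance from the origin to the radius. Differentiating the explicit integrals in $t$ produces two independent first-order relations among $f_1,h_1,f_1',h_1'$, from which the displayed identity follows by purely algebraic manipulation.

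Finally, the boundary data are read off from the behavior of $f_1,h_1$ at the endpoints of $[1,\infty)$. As $t\to 1^+$ both functions are equivalent to $(t^2-1)^{n/2}$, so $h_1/f_1\to 1$, yielding $\Phi_n(0)=0$ and $\Phi_n'(0)=1$. As $t\to\infty$, a Pfaff transformation moves the argument of both Gauss functions to $1-1/t^2\to 1^-$: the hypergeometric function in $h_1$ then has $c-a-b=0$ and a logarithmic singularity, while the one in $f_1$ has $c-a-b=1$ and stays bounded; extracting leading constants gives
$$
f_1(t)\sim \frac{\Gamma((n+2)/2)}{\Gamma((n+1)/2)\Gamma(3/2)}\,t,\qquad h_1(t)\sim \frac{2\,\Gamma((n+2)/2)}{\Gamma(1/2)\Gamma((n+1)/2)}\,\ln t,
$$
and the Legendre identity $\Gamma(3/2)=\tfrac12\Gamma(1/2)$ converts the ratio $h_1(t)/\ln f_1(t)$ into the constant appearing in \eqref{Phi10}. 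This singles out $\Phi_n$ as the unique solution of the singular initial value problem \eqref{Phi1} subject to the asymptotic normalization \eqref{Phi10}, which completes the sketch.
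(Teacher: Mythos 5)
Your proposal is correct and follows essentially the same route as the paper: specialize Theorem~\ref{th:N} to $\alpha=1$, use $f_{-1}=f_1$ to reduce \eqref{fft} to $f_1(t)=\sqrt{uv}$, set $\Phi_n=h_1\circ f_1^{-1}$, and verify \eqref{Phi1}--\eqref{Phi10} from the first-order relations $th_1'=f_1'$ and $(n-1)h_1=(t^2-1)f_1'-tf_1$ together with the hypergeometric asymptotics. The "main technical obstacle" you identify is in fact immediate once one notes that $th_1'=f_1'$ gives $\Phi_n'=1/t$, which is how the paper disposes of it in two lines; your asymptotic constants for $f_1$ and $h_1$ check out against \eqref{hyper4}.
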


\begin{remark}
Concerning the definition of the shape function $\Phi_n$, we note that the initial problem \eqref{Phi1} itself does not determine a unique solution because the initial condition $\Phi'_n(0)=1$ is  singular. One can prove that if $\phi(x)$ solves \eqref{Phi1} then any solution of \eqref{Phi1} is obtained by the homothetic scaling $\frac{1}{c}\phi(cx)$, $c>0.$ Note also that all thus obtained solutions have the logarithmic growth at infinity, so a further normalization like \eqref{Phi10} is needed.
\end{remark}

The proof of Theorem~\ref{th:N} relies on a refinement  of the technique initiated in \cite{Tk2005} and uses the Bathtub principle. We obtain some preliminary results in section~\ref{sec:prelim} and finish the proof in section~\ref{sec:lemma}. Then we prove Theorem~\ref{th:main} in section~\ref{sec:proofcor}.

Remark also that the obtained gradient estimates are sharp for Euclidean balls with constant density. The latter symmetry phenomenon is natural for the Riesz and Newton potentials  \cite{Fraenkel}, \cite{MazyaSobolev}, and studied recently in connection with Riesz potential integral equations on  exterior domains \cite{Reichel09}, \cite{HuangHL}, \cite{Huang}, \cite{XiaoJ}.

We finally mention that our results can also be thought of as an analogue of the polynomial moment inequalities for the singular Riesz potential $d\mu_\alpha(x)=|x|^{\alpha-n}$ in $\R{n}$. Then the above functionals are recognized as the lower order polynomial moments:
$$
\mathcal{F}_\alpha\rho=\int 1d\mu_\alpha(x),\quad
\mathcal{H}_\alpha\rho=\int x_1d\mu_\alpha(x),\quad
\mathcal{F}_{\alpha-2}\rho=\int (x_1^2+\ldots+x_n^2) d\mu_\alpha(x)
$$
It is natural to speculate what is the natural extension of the Hankel determinant inequalities for $d\mu_\alpha$. We pursue this theme elsewhere.

\section{Auxiliary identities for spherical integrals}\label{sec:prelim}
Let us decompose $\R{n}=\R{}\times \R{n-1}$ such that $x=(x_1,y)$, where $y=(x_2,\ldots,x_n)\in\R{n-1}$. Given $0<\sigma<\tau$, let $B(\tau,\sigma)$ denote the open ball of radius $\sqrt{\tau^2-\sigma^2}$ centered at $(\tau,0)\in \R{n}$, i.e.
$$
B(\tau,\sigma)=\{x=(x_1,y)\in \R{n}: |x|^2-2\tau x_1+\sigma^2<0\},
$$
and let $D(t):=B(t,1)$. We refer to $B(\tau,\sigma)$ as an $x_1$-\textit{ball}.   It is easy to see and will be used later that the inversion $x\to x^*=x/|x|^2$ acts on $x_1$-{balls as follows:
\begin{equation}\label{invers1}
B(\tau,\sigma)^*=B(\half{\tau}{\sigma^2}, \half{1}{\sigma}), \qquad D(t)^*=D(t).
\end{equation}
Let us fix some notation:
\begin{equation}\label{HF}
\begin{split}
F_\alpha(\tau,\sigma)&\equiv \mathcal{F}_\alpha\chi_{B(\tau,\sigma)}= \int_{B(\tau,\sigma)}\frac{1}{|x|^{n-\alpha}}\,d_\omega x =\sigma^{\alpha}f_\alpha(\half{\tau}{\sigma})\\
H_\alpha(\tau,\sigma)&\equiv \mathcal{H}_\alpha\chi_{B(\tau,\sigma)}= \int_{B(\tau,\sigma)}\frac{x_1}{|x|^{n+2-\alpha}}\,d_\omega x=\sigma^{\alpha-1} h_\alpha(\half{\tau}{\sigma})
\end{split}
\end{equation}
where
$$
f_\alpha(t):=F_\alpha(t,1),\qquad h_\alpha(t):=H_\alpha(t,1).
$$
All the introduced functions  depend also on the ambient dimension $n$.

First notice that  $f_\alpha(t)$ and $h_\alpha(\alpha)$ are real analytic functions of  $t>0$, and for any real $\alpha$
\begin{equation}\label{inf}
\lim_{t\to\infty}f_\alpha(t)=\int_{x_1>0}\frac{ d_\omega x}{|x|^{n-\alpha}}=\infty.
\end{equation}
Applying   Stoke's formula  to
\begin{align*}
0&=\int_{B(\tau,\sigma)}\mathrm{div} \left(\frac{|x|^2-2\tau x_1+\sigma^2}{|x|^{n-\alpha+2}}\cdot x\right)d_\omega x
\end{align*}
we get the following identity:
\begin{equation}\label{ident1}
\alpha F_{\alpha}(\tau,\sigma)+(\alpha-2)\sigma^2F_{\alpha-2}(\tau,\sigma)= 2(\alpha-1)\tau H_{\alpha}(\tau,\sigma).
\end{equation}
Further, applying the inversion readily yields  by virtue of \eqref{invers1} that
\begin{equation}\label{ident2}
F_{\alpha}(\tau,\sigma)=F_{-\alpha}(\half{\tau}{\sigma^2}, \half{1}{\sigma}),
\qquad
H_{\alpha}(\tau,\sigma)=H_{2-\alpha}(\half{\tau}{\sigma^2}, \half{1}{\sigma}).
\end{equation}
For the reduced functions this amounts to
\begin{equation}\label{ident1a}
\begin{split}
2(\alpha-1)th_{\alpha}(t)&=\alpha f_{\alpha}(t)+(\alpha-2)f_{\alpha-2}(t),\\
f_\alpha(t)&=f_{-\alpha}(t),\\
h_{\alpha}(t)&=h_{2-\alpha}(t)
\end{split}
\end{equation}

\begin{lemma}
For any $\alpha\in \R{}$, the following identities hold:
\begin{align}
2th_{\alpha}'\,\,\,&=f'_\alpha+f'_{\alpha-2}, \label{coarea1}\\
(n-\alpha)h_{\alpha}&=(t^2-1)f_\alpha'-\alpha tf_\alpha, \label{coarea2}\\
\lim_{t\to 1+0}f_\alpha(t)(1-t^2)^{-n/2}&=1.\label{coarea3}
\end{align}
\end{lemma}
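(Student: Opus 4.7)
The plan is to exploit the explicit parametrization of the $x_1$-ball $B(t,1)=\{x:|x|^2-2tx_1+1<0\}$, on whose boundary the crucial algebraic identity $|x|^2=2tx_1-1$ holds.

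First, for \eqref{coarea1}, I would apply the Reynolds transport formula (differentiation of an integral over a moving domain). Taking $\phi(x,t)=|x|^2-2tx_1+1$ as the defining function, one has $|\nabla_x\phi|=2|x-te_1|=2\sqrt{t^2-1}$ on $\partial B(t,1)$ and $\partial_t\phi=-2x_1$, so the outward normal velocity is $V_n=x_1/\sqrt{t^2-1}$. This yields the surface-integral representations
\begin{align*}
f_\alpha'(t)&=\frac{1}{\omega_n\sqrt{t^2-1}}\int_{\partial B(t,1)}\frac{x_1}{|x|^{n-\alpha}}\,d\Haus{n-1},\\
h_\alpha'(t)&=\frac{1}{\omega_n\sqrt{t^2-1}}\int_{\partial B(t,1)}\frac{x_1^2}{|x|^{n+2-\alpha}}\,d\Haus{n-1}.
\end{align*}
Using the boundary identity $2tx_1=|x|^2+1$, one has the pointwise equality
$$
2t\,\frac{x_1^2}{|x|^{n+2-\alpha}}=\frac{x_1(|x|^2+1)}{|x|^{n+2-\alpha}}=\frac{x_1}{|x|^{n-\alpha}}+\frac{x_1}{|x|^{n-(\alpha-2)}}
$$
on $\partial B(t,1)$. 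Integrating and dividing by $\omega_n\sqrt{t^2-1}$ gives \eqref{coarea1} at once.

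Second, for \eqref{coarea2}, I would apply the divergence theorem to two carefully chosen vector fields on $B(t,1)$. Writing $V_1=x|x|^{\alpha-n}$ and $V_2=e_1|x|^{\alpha-n}$, direct computation gives $\Div V_1=\alpha|x|^{\alpha-n}$ and $\Div V_2=(\alpha-n)x_1|x|^{\alpha-n-2}$. The outward unit normal at $x\in\partial B(t,1)$ is $\nu(x)=(x-te_1)/\sqrt{t^2-1}$, so on the boundary $x\cdot\nu=(tx_1-1)/\sqrt{t^2-1}$ and $\nu_1=(x_1-t)/\sqrt{t^2-1}$. Introducing the auxiliary surface integral
$$
A_\alpha(t):=\frac{1}{\omega_n\sqrt{t^2-1}}\int_{\partial B(t,1)}\frac{d\Haus{n-1}}{|x|^{n-\alpha}},
$$
Stokes's formula applied to $V_1$ and $V_2$ yields, respectively,
$$
\alpha f_\alpha(t)=tf'_\alpha(t)-A_\alpha(t),\qquad (\alpha-n)h_\alpha(t)=f'_\alpha(t)-tA_\alpha(t).
$$
Eliminating $A_\alpha(t)$ between these two linear relations immediately produces \eqref{coarea2}. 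The main (minor) obstacle is purely bookkeeping, keeping track of signs and normalization factors in the boundary representation of $\nu$.

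Finally, for the asymptotic \eqref{coarea3}, I would observe that $B(t,1)$ is the Euclidean ball of radius $r:=\sqrt{t^2-1}$ centered at $(t,0,\ldots,0)$. As $t\to 1^+$, this ball shrinks to the point $(1,0,\ldots,0)$, so $|x|^{\alpha-n}\to 1$ uniformly on $\overline{B(t,1)}$. Since the normalized volume equals $|B(t,1)|/\omega_n=r^n=(t^2-1)^{n/2}$, one gets $f_\alpha(t)=(t^2-1)^{n/2}(1+o(1))$ as $t\to 1^+$, which is exactly \eqref{coarea3} (with the sign convention understood so that $(1-t^2)^{-n/2}$ is read as $(t^2-1)^{-n/2}$ in the right half-neighborhood of $1$ where the integral is defined). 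None of these three steps is deep; the main point of the lemma is to package the coarea and divergence-theorem identities in a form ready for the variational arguments to follow.
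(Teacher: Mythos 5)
Your argument is correct and follows essentially the same route as the paper: the transport/co-area identity giving the boundary-integral representations of $f_\alpha'$ and $h_\alpha'$, the boundary relation $|x|^2+1=2tx_1$ for \eqref{coarea1}, and the divergence theorem applied to $x|x|^{\alpha-n}$ and $e_1|x|^{\alpha-n}$ for \eqref{coarea2} (the paper combines these into the single field $(tx-e_1)|x|^{\alpha-n}$ instead of eliminating your auxiliary surface integral $A_\alpha$, but the computation is identical). The only genuine variation is in \eqref{coarea3}, where you use uniform convergence of $|x|^{\alpha-n}$ to $1$ on the shrinking ball together with $|B(t,1)|=\omega_n(t^2-1)^{n/2}$ in place of the paper's l'Hospital argument on $f_\alpha'$; both are valid, and you are right that $(1-t^2)^{-n/2}$ is to be read as $(t^2-1)^{-n/2}$.
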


\begin{proof}
Let us consider an axillary function $\lambda(x)=(|x|^2+1)/2x_1$. Then $x_1\nabla \lambda+\lambda e_1=x$, hence
\begin{equation}\label{lamb}
\nabla \lambda=\frac{x-\lambda e_1}{x_1}
\end{equation}
and
\begin{equation}\label{nabla1}
x_1^2|\nabla \lambda|^2=|x|^2-2\lambda x_1+\lambda^2=\lambda^2-1.
\end{equation}
Notice that the $s$-level set $\{x\in \R{n}:\lambda(x)=s\}$ is exactly the boundary sphere $\partial D(s)$, therefore $\lambda$ foliates the punctured ball $D(t)\setminus \{(1,0)\}$ into the family of  spheres $\{\partial D(s): 1< s\le t\}$. Applying the co-area formula one obtains from \eqref{HF} and \eqref{nabla1}
$$
 f_\alpha(t)=\frac{1}{\omega_n}\int_{1}^t ds\int_{\partial D(s)}\frac{dA}{|x|^{n-\alpha}|\nabla \lambda(x)|} =\int_{1}^t \frac{ds}{\sqrt{s^2-1}}\int_{\partial D(s)}\frac{x_1}{|x|^{n-\alpha}}d_\omega A
$$
where $dA$ is the $(n-1)$-dimensional surface measure on $\partial D(s)$ and $d_\omega A=\frac{1}{\omega_n} dA$.
In particular,
\begin{equation}\label{diff1}
f_\alpha'(t)=\frac{1}{\sqrt{t^2-1}}\int_{\partial D(t)}\frac{x_1}{|x|^{n-\alpha}}d_\omega A.
\end{equation}
Similarly one finds
\begin{align*}
h_{\alpha}(t)&=\int_{1}^t \frac{dt}{\sqrt{t^2-1}}\int_{\partial D(s)}\frac{x_1^2}{|x|^{n+2-\alpha}}d_\omega A\\
&=
\int_{1}^t \frac{dt}{2t\sqrt{t^2-1}}\int_{\partial D(s)}\frac{x_1(|x|^2+1)}{|x|^{n+2-\alpha}}d_\omega A.
\end{align*}
Differentiating the obtained identity and applying \eqref{diff1} yields \eqref{coarea1}.

Next, $\lambda|_{\partial D(s)}=s$, therefore the outward normal vector $\nu$ along the boundary $\partial D(s)$ is found from \eqref{lamb} by
$$
\nu=\frac{\nabla\lambda}{|\nabla \lambda|}=\frac{x-se_1}{\sqrt{s^2-1}}
$$
thus, using identities
$$
\mathrm{div}\frac{x}{|x|^{n-\alpha}}=\frac{\alpha}{|x|^{n-\alpha}}, \qquad
\mathrm{div}\frac{e_1}{|x|^{n-\alpha}}=-\frac{(n-\alpha)x_1}{|x|^{n+2-\alpha}},
$$
and applying Stoke's formula  we obtain by virtue of \eqref{diff1}
\begin{align*}
\alpha tf_\alpha(t)+(n-\alpha)h_{p+2}(t)&=
\int_{D(t)}\left(\frac{\alpha t}{|x|^{n-\alpha}}-\frac{(n-\alpha)x_1}{|x|^{n+2-\alpha}}\right)d_\omega x
\\
&=\int_{\partial D(t)}\frac{\scal{tx-e_1}{\nu}}{|x|^{n-\alpha}}d_\omega A
\\
&=\sqrt{t^2-1}\int_{\partial D(t)}\frac{x_1d_\omega A}{|x|^{n-\alpha}}\\
&=(t^2-1)f_{\alpha}'(t)
\end{align*}
which proves \eqref{coarea2}. Finally, it follows from \eqref{diff1} that
\begin{align*}
\lim_{t\to1+0}f_\alpha'(t)(t^2-1)^{(2-n)/2}
&=\lim_{t\to1+0}\frac{1}{(t^2-1)^{(n-1)/2}}\int_{\partial D(t)}\frac{x_1}{|x|^{n-\alpha}}d_\omega A\\
&=\frac{1}{\omega_n}\lim_{t\to1+0}|\partial D(t)|\cdot (t^2-1)^{(1-n)/2}=n.
\end{align*}
Then \eqref{coarea3} follows from $f_\alpha(1)=0$ and the previous identity by virtue of l'Hospital's rule.
\end{proof}

\begin{lemma}\label{lem:est}
If $0<\gamma<1$ then for any $p,q$ there holds that
\begin{equation}\label{fpp}
 f_p^\gamma(t) f_q^{1-\gamma}(t)\ge f_{\gamma p+(1-\gamma)q}(t).\end{equation}
\end{lemma}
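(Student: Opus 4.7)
The plan is to read the lemma as a log-concavity statement for the map $\alpha\mapsto f_\alpha(t)$ at fixed $t>1$, and to derive it from a single application of H\"older's inequality to the integral representation of $f_\alpha$ coming from \eqref{HF}:
$$
f_\alpha(t)=\int_{D(t)}|x|^{\alpha-n}\,d_\omega x.
$$
Before applying H\"older, I would record that this integral is finite for \emph{every} real $\alpha$, which is needed because the lemma imposes no restriction on $p,q$. This is immediate: the ball $D(t)=B(t,1)$ has center $(t,0)$ and radius $\sqrt{t^2-1}<t$, so the origin lies outside $\overline{D(t)}$ for $t>1$, and hence $|x|^{\alpha-n}$ is continuous and bounded on $D(t)$.

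Next, I would use the purely algebraic identity
$$
\gamma p+(1-\gamma)q-n=\gamma(p-n)+(1-\gamma)(q-n)
$$
to factor the integrand of $f_{\gamma p+(1-\gamma)q}(t)$ as
$$
|x|^{\gamma p+(1-\gamma)q-n}=\bigl(|x|^{p-n}\bigr)^{\gamma}\bigl(|x|^{q-n}\bigr)^{1-\gamma}.
$$
Then I would apply H\"older's inequality with conjugate exponents $1/\gamma$ and $1/(1-\gamma)$ (admissible since $0<\gamma<1$), matching the two factors with those conjugate powers, to obtain
$$
f_{\gamma p+(1-\gamma)q}(t)\le\Bigl(\int_{D(t)}|x|^{p-n}\,d_\omega x\Bigr)^{\!\gamma}\Bigl(\int_{D(t)}|x|^{q-n}\,d_\omega x\Bigr)^{\!1-\gamma}=f_p(t)^{\gamma}\,f_q(t)^{1-\gamma},
$$
which is precisely \eqref{fpp}.

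There is no real obstacle here: the heart of the argument is a one-line invocation of H\"older's inequality once the integrand has been split correctly. The only small technical point worth mentioning is the integrability of $|x|^{\alpha-n}$ for negative $\alpha$, which is taken care of by the fact that $D(t)$ is compactly supported away from the origin, as noted above.
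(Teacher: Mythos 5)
Your proof is correct and is exactly the argument the paper intends: the paper's proof of this lemma is the one-line remark that it is ``a straightforward corollary of \eqref{HF} and the H\"older inequality,'' and your write-up simply supplies the details (the splitting of the integrand and the choice of conjugate exponents $1/\gamma$, $1/(1-\gamma)$). The observation that $D(t)$ stays away from the origin, so all the integrals converge for arbitrary $p,q$, is a correct and worthwhile addition.
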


\begin{proof}
A straightforward corollary of \eqref{HF} and the  H\"older inequality.
\end{proof}
\medskip

\section{The reduced functions via hypergeometric functions}
Differentiating the first identity in \eqref{ident1a} followed by elimination of $f_{\alpha-2}$, $f'_{\alpha-2}$ and $h_{\alpha}'$ by virtue of \eqref{coarea1} and \eqref{coarea2} readily yields the following identity:
\begin{equation}\label{ODEp}
t(t^2-1)f_{\alpha}''+(t^2-n+1)f'_{\alpha}-t\alpha^2f_{\alpha}=0.
\end{equation}
Setting
$$
f_\alpha(t)=(t^2-1)^{n/2}\phi_\alpha(t^2),
$$
the equation \eqref{ODEp} is transformed to the hypergeometric differential equation
\begin{equation}\label{ODEhyp}
z(1-z)\phi_\alpha''(z)+\biggl(\frac{n}{2}-(1+n)z\biggr)\phi_\alpha'(z) -\frac{(n+\alpha)(n-\alpha)}{4}\phi_\alpha(z)=0.
\end{equation}
with
$$
a=\frac{n-\alpha}{2}, \quad b=\frac{n+\alpha}{2}, \quad c=\frac{n}{2}.
$$
By \eqref{coarea3}, $\phi_{\alpha}(1)=1$. Since $\phi_{\alpha}(z)$ is regular at $z=1$, it follows from a Kummer transformation (see formula 15.5.5 in \cite[p.~563]{AbramS}) that
\begin{equation}\label{hyper}
\phi_{\alpha}(z)=F(\half12(n-\alpha), \half12(n+\alpha);\half12n+1,1-z),\quad z\ge1.
\end{equation}
 Using another Kummer transformation (formula 15.5.11 in \cite[p.~563]{AbramS}) yields an alternative representation
\begin{equation}\label{hyper1}
\phi_{\alpha}(z)=z^{-\frac{n+\alpha}{2}}F(\half{n+\alpha}2, \half{2+\alpha}2;\half{n+2}2,\half{z-1}{z}),\quad z\ge1.
\end{equation}
The latter representation is useful for the asymptotic behavior of $f_\alpha$ at $\infty$. From \eqref{hyper1} we obtain
\begin{equation}\label{hyper2}
f_\alpha(t)=t^{-n-\alpha}(t^2-1)^{n/2}F(\half{n+\alpha}2, \half{2+\alpha}2;\half{n+2}2,\frac{t^2-1}{t^2}).
\end{equation}
In particular, using the Gauss type identity
\begin{equation}\label{hyper3}
\lim_{z\to 1}(1-z)^{a+b-c}F(a,b;c;z)=\frac{\Gamma(c)\Gamma(a+b-c)} {\Gamma(a)\Gamma(b)}, \quad c<a+b
\end{equation}
we have from \eqref{hyper2} the following asymptotic growth:
\begin{equation}\label{hyper4}
\lim_{t\to \infty}\frac{f_\alpha(t)}{t^\alpha}=\frac{\Gamma(\half12n+1)\Gamma(\alpha)} {\Gamma(\half12(n+\alpha))\Gamma(\half12\alpha+1)}.
\end{equation}
Also, applying a linear transformation  (formula 15.3.5 in \cite[p.~559]{AbramS}) to \eqref{hyper2} yields
\begin{equation}\label{hyper1lin}
f_\alpha(t)=t^{2-n}(t^2-1)^{n/2}F(\half{2-\alpha}{2}, \half{2+\alpha}{2};\half{n+2}{2},1-t^2).
\end{equation}

A similar argument also works  for $h_\alpha$:  eliminating $f_{\alpha-2}$ from \eqref{coarea1} by virtue of \eqref{ident1a}${}_1$ yields
\begin{equation}\label{fheq}
f_\alpha(t)=\frac{t^2-1}\alpha h_\alpha'(t)+\frac1{\alpha t} ((\alpha-1)t^2+1-n)h_\alpha(t),
\end{equation}
therefore \eqref{coarea1} amounts to
$$
t^2(t^2-1)h_\alpha''(t)
+t(t^2+1-n)h_\alpha'(t)+
(n-1-(\alpha-1)^2t^2)h_\alpha(t)=0.
$$
The a substitution $h_\alpha(t)=t(t^2-1)^{n/2}\psi_\alpha(t^2)$ transforms the latter equation into a hypergeometric one:
\begin{equation}\label{ODEhyp1}
z(1-z)\psi_\alpha''(z)+\biggl(1+\frac{n}{2}-(n+2)z\biggr)\psi_\alpha'(z) -\frac{(n+\alpha)(n+2-\alpha)}{4}\psi_\alpha(z)=0
\end{equation}
with
$$
a'=\frac{n+2-\alpha}{2}, \quad b'=\frac{n+\alpha}{2}, \quad c'=\frac{n+2}{2}.
$$
For the same reasons as above, we obtain
\begin{equation}\label{hyperpsi}
\psi_{\alpha}(z)=F(\half12(n+2-\alpha), \half12(n+\alpha);\half12n+1,1-z),\quad z\ge1,
\end{equation}
and
\begin{equation}\label{hyperpsi2}
h_\alpha(t)=t^{1-n}(t^2-1)^{n/2}F(\half{2-\alpha}{2}, \half\alpha2;\half{n+2}2,1-t^2).
\end{equation}

Let us consider some particular cases when $f_\alpha$ can be explicitly specified.
When $n=1$ and $\alpha$ is arbitrary, one easily finds from \eqref{HF} that
\begin{equation}\label{explicit1}
\begin{split}
f_\alpha(t)&=\left\{
         \begin{array}{ll}
           \frac{1}{\alpha}\sinh \alpha\xi, & \hbox{if $\alpha\ne 0$;} \\
           \xi, & \hbox{if $\alpha=0$,}
         \end{array}
       \right.,
\end{split}
\end{equation}
and $h_{\alpha}(t)=f_{\alpha-1}(t)$, where
\begin{equation}\label{xieq}
t=\cosh \xi, \quad 0\le \xi<\infty.
\end{equation}
This yields
\begin{align*}
f_\alpha(t)&=\frac{\biggl(t+\sqrt{t^2-1}\biggr)^\alpha - \biggl(t-\sqrt{t^2-1}\biggr)^\alpha}{2\alpha}
\end{align*}

When $\alpha=2$,   the spherical mean property for harmonic functions  $|x|^{2-n}$ and $x_1|x|^{-n}$ was used in \cite[Sec.~2.2]{Tk2005} to obtain explicit expressions
\begin{equation}\label{spheric1}
f_{2}(t)=(t^2-1)^{n/2}t^{2-n}, \qquad
h_{2}(t)=\half{1}{t}f_2(t)
\end{equation}
and
\begin{equation}\label{spheric2}
f_{0}(t)=\int_{1}^t (s^2-1)^{(n-2)/2}s^{1-n}\,ds.
\end{equation}

Another interesting particular case is $\alpha=1$, we have by  \eqref{ident1a}${}_1$ that $f_{-1}=f_1$, and the  reduced functions $f_1$ and $h_1$ can be determined explicitly at least when $n$ is an odd integer.

\section{Proof of Theorem~\ref{th:N}}\label{sec:lemma}



\textbf{Step 1. } First we assume that $\supp\rho(x)\subset \R{n}_{x_1>0}$ and let $\mathscr{N}^+_\alpha(u,v)$ denote the corresponding supremum in \eqref{test0}.
We claim that for any $u,v>0$  there exist $0<\sigma<\tau$ such that
\begin{equation}\label{systemFH}
{F}_\alpha(\tau,\sigma)=u, \quad
{F}_{\alpha-2}(\tau,\sigma)=v,
\end{equation}
in other words, the pair $(u,v)$ is admissible by a characteristic function of an $x_1$-centered ball.
Indeed, rewrite  \eqref{systemFH} by virtue of the reduced functions as the system
\begin{equation}\label{systemFH1}
\left\{
\begin{array}{rcl}
f_\alpha(t)&=&u\sigma^{-\alpha}, \\
f_{\alpha-2}(t)&=&v\sigma^{2-\alpha}.
\end{array}
\right.
\end{equation}
where $t=\half{\tau}{\sigma}$. Consider an auxiliary function
\begin{equation}\label{funcg}
g(t)=f_{\alpha}^{2-\alpha}(t)f^{\alpha}_{\alpha-2}(t).
\end{equation}
It follows from $0<\alpha<2$ that $g(t)$ is an increasing function of $t$ and by virtue \eqref{coarea3} we have $\lim_{t\to 1+0}g(t)=0$. Furthermore, setting $\gamma=\frac{2-\alpha}{2}$, $p=\alpha$ and $q=\alpha-2$ in \eqref{fpp}  yields under the made assumptions that
$$
g(t)=f_{\alpha}^{2-\alpha}(t)f^{\alpha}_{\alpha-2}(t)\ge f_{0}^2(t),
$$
hence  \eqref{inf} implies that $\lim_{t\to\infty} g(t)=\infty$. Thus, $g$ is a  bijection of $[1,\infty)$ onto $[0,\infty)$.

Now, let $t=t_0$ be the unique solution of $g(t)=u^{2-\alpha}v^{\alpha}$ and let
\begin{equation}\label{sigmadef}
\sigma_0:=f^{-\frac{1}{\alpha}}_\alpha(t_0)u^{\frac{1}{\alpha}}.
\end{equation}
Then it follows from \eqref{systemFH1} and \eqref{ident2} that $\sigma_0$ and $\tau_0=\sigma_0 t_0$ is a (unique) solution of \eqref{systemFH}. This proves our claim, and also implies that the set of admissible pairs $(u,v)$ coincides with the nonnegative quadrant $\R{2}_{\ge0}$.

In the introduced  notation, let $\rho_0(x)=\chi_{B(\tau_0,\sigma_0)}(x).$
Then $\rho_0(x)$ is a test function for \eqref{test0}. Thus, using \eqref{HF} we find
$$
\mathscr{N}^+_\alpha(u,v)\ge (\mathcal{H}_\alpha\rho_0)^2 =\sigma_0^{2(\alpha-1)}h^\alpha_{2}(t_0).
$$
On the other hand, if $\rho(x)$ is an arbitrary test function for \eqref{test0} then by our choice, both $\rho$ and $\rho_0$ are test functions in the auxiliary problem
\begin{equation}\label{test1}
A=\sup\,\{\int \rho(x)\phi(x)d\mu(x): \,\,\int \rho(x)d\mu(x)=\sigma_0^2 u+v, \,\,\,\, 0\le \rho\le1\},
\end{equation}
where $\phi(x)=\frac{x_1}{|x|^{2}+\sigma_0^2}$ and $d\mu(x)=(\frac{\sigma_0^2}{|x|^{n-\alpha}}+\frac{1}{|x|^{n+2-\alpha}}) d_\omega x$. In particular, we have
$$
\mathscr{N}^+_\alpha(u,v)\le A^2.
$$
By the `Bathtub principle' \cite[p.~28]{Lieb}, a  solution of the  variational problem \eqref{test1} is given by the characteristic function of a sublevel set $\{x\in \R{n}: \phi(x)\le \frac{1}{2\tau}\}\equiv B(\tau,\sigma_0)$ where $\tau$ is uniquely determined by the test condition
\begin{equation}\label{Bt}
\int \chi_{B(\tau,\sigma_0)}(x)d\mu(x)=\sigma_0^2 u+v.
\end{equation}
Since the latter integral is an increasing function of $\tau$ and since $\tau_0$ satisfies \eqref{Bt}, we conclude that $\rho_0=\chi_{B(\tau,\sigma_0)}$ is a maximizer in \eqref{test1}. This yields
$$
A=\int \rho_0(x)\phi(x)d\mu(x)=\int\frac{x_1\rho_0}{|x|^{n+2-\alpha}}d_\omega x.
$$
Combining the obtained inequalities and using \eqref{sigmadef} implies
\begin{equation}\label{Nuv0}
\mathscr{N}^+_\alpha(u,v)=\sigma_0^{2(\alpha-1)}h^2_{\alpha}(t_0)
\equiv u^{2(\alpha-1)/\alpha}\frac{h^2_\alpha(t_0)} {f_\alpha^{2(\alpha-1)/\alpha}(t_0)}.
\end{equation}

\textbf{Step 2.} We claim that $\mathscr{N}^+_\alpha(u,v)$ defined implicitly by~\eqref{Nuv0} is an increasing function of each argument separately. It suffices to verify this for an auxiliary function $G(u,v)=(\mathscr{N}^+_\alpha(u,v))^{\alpha/2}$. On eliminating $\sigma_0$ by virtue of \eqref{systemFH1} we obtain
\begin{equation}\label{g1g2}
G(u,v)=u^{\alpha-1}g_1(t_0) =v^{\frac{\alpha(1-\alpha)}{2-\alpha}}g_2(t_0)^{\frac{\alpha}{2-\alpha}},
\end{equation}
where
$$
g_1(t)=\frac{h_\alpha^{\alpha}(t)}{f_{\alpha}^{\alpha-1}(t)}, \quad
g_2(t)=\frac{h_\alpha^{2-\alpha}(t)}{f_{\alpha-2}^{1-\alpha}(t)}.
$$
Using \eqref{ident1a} and \eqref{coarea1} yields for the logarithmic derivatives
\begin{align*}
\frac{g_1'}{g_1}&=\frac{\alpha h_\alpha'f_\alpha-(\alpha-1) f_\alpha'h_\alpha}{f_\alpha h_\alpha}=
\frac{\alpha f_{\alpha-2}'f_\alpha-(\alpha-2) f_\alpha'f_{\alpha-2}}{2tf_\alpha h_\alpha}= \frac{g'}{g}\cdot\frac{f_{\alpha-2}}{2th_\alpha}>0
\\
\frac{g_2'}{g_2}&=\frac{(2-\alpha) h_\alpha'f_{\alpha-2}+(\alpha-1) f_{\alpha-2}'h_\alpha}{f_{\alpha-2} h_\alpha}=
\frac{\alpha f_{\alpha}h'_\alpha-(\alpha-1) f_\alpha'h_{\alpha}}{f_{\alpha-2} h_\alpha}= \frac{g'_1}{g_1}\cdot\frac{f_\alpha}{f_{\alpha-2}}>0
\end{align*}
which implies that $g_1$ and $g_2$ are increasing functions of $t$ for any $0<\alpha\le 2$.
Now suppose that $u_1\ge u$ and $v_1\ge v$. Let $t_1$ be the unique solution of $g(t_1)=u_1^{\alpha-2}v_1^\alpha$. Since $g(t)$ in \eqref{funcg} is increasing we conclude that $t_1\ge t$. First let us consider $1\le \alpha\le 2$. Then using the first equality in \eqref{g1g2} yields
$$
G(u,v)=u^{\alpha-1}g_1(t_0) \le
u_1^{\alpha-1}g_1(t_1)=G(u_1,v_1).
$$
Similarly, if $0< \alpha\le 1$ then using the second equality in \eqref{g1g2} yields
$$
G(u,v)= v^{\frac{\alpha(1-\alpha)}{2-\alpha}}g_2(t_0)^{\frac{\alpha}{2-\alpha}} \le
v_1^{\frac{\alpha(1-\alpha)}{2-\alpha}}g_2(t_1)^{\frac{\alpha}{2-\alpha}}= G(u_1,v_1).
$$
The claim follows.

\textbf{Step 3. }
Now suppose that $\rho$ is an arbitrary density with a compact support not containing the origin. We claim that $\mathscr{N}^+_\alpha(u,v)\le \mathscr{N}_\alpha(u,v)$. Let $U_{\pm}$ and $U_{0}$ denote the classes of densities satisfying respectively the conditions

(a) $\supp\rho\cap \R{n}_{\mp x_1>0}$ has measure zero;

(b) both $\supp\rho\cap \R{n}_{x_1>0}$ and $\supp\rho\cap \R{n}_{x_1<0}$ have nonzero measures.

\noindent
Let $\mathscr{N}^{*}_\alpha(u,v)$ denote the supremum in \eqref{test0} taken over the corresponding class of densities. Then
$$
\mathscr{N}_\alpha(u,v)= \max \{\mathscr{N}^-_\alpha(u,v), \mathscr{N}^+_\alpha(u,v), \mathscr{N}^0_\alpha(u,v)\},
$$
Let $x\to\tilde{x}$ be the reflection  about the hyperplane $x_1=0$. Then $\widetilde{\rho}(x):=\rho(\tilde{x})$ is a bijection between $U_+$ and $U_-$. This implies   $\mathscr{N}^-_\alpha(u,v)=\mathscr{N}^+_\alpha(u,v)$. Let $\rho\in U^0$ and let $\rho^\pm(x)=\chi_{\R{n}_{\pm x_1>0}}\rho(x)$ such that $\rho^\pm(x)\in U^{\pm}$ and $\rho=\rho^++\rho^-$ a.e. in $\R{n}$. We have
\begin{align*}
|\mathcal{H}_\alpha(\rho)|^2&=|\mathcal{H}_\alpha(\rho^+) -\mathcal{H}_\alpha(\widetilde{\rho^-})|^2
\le \max\{\mathcal{H}_\alpha(\rho^+)^2 ,\,\mathcal{H}_\alpha(\widetilde{\rho^-})^2\}\\
&\le
\mathcal{H}_\alpha(\rho_1)^2 \le \mathscr{N}^+_\alpha(u_1,v_1),
\end{align*}
where $\rho_1$ is one of $\rho^+$ and $\widetilde{\rho^-}$, and
$\mathcal{F}_\alpha\rho_1=u_1\le u$, $\mathcal{F}_{\alpha-2}\rho_1 =v_1\le v.$ By the  monotonicity of $\mathscr{N}^+_\alpha$,
$$
|\mathcal{H}_\alpha(\rho)|^2\le
\mathscr{N}^+_\alpha(u,v)
$$
implying $\mathscr{N}^0_\alpha(u,v)\le \mathscr{N}^+_\alpha(u,v)$. This proves our claim and, thus, finishes the proof of the theorem.

\section{Some applications}


First we demonstrate how Theorem~\ref{th:N} implies Theorem~\ref{th:11}. To this end consider $\alpha=2$.

\begin{corollary}
\label{cor:alpha2}
$$\mathscr{N}_2(u,v)=u\M_n(v)
$$where $\M_n(t)$ is the unique solution of the  initial problem \eqref{equ:diff}.
\end{corollary}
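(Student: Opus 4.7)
The plan is to specialize Theorem~\ref{th:N} at $\alpha=2$ and recognize the resulting one-variable shape factor as the solution of the IVP \eqref{equ:diff}.

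At $\alpha=2$, two simplifications occur simultaneously. The defining relation \eqref{fft} degenerates to $f_0(t)^2=v^2$, so the parameter $t=t(v)$ is the unique root of $f_0(t)=v$ (monotonicity of $f_0$ on $[1,\infty)$ together with the limits $f_0(1)=0$ and $f_0(\infty)=\infty$ were already established in Step~1 of the proof of Theorem~\ref{th:N}), and the exponent $2(\alpha-1)/\alpha$ appearing in \eqref{Nexplicit} evaluates to $1$. Consequently \eqref{Nexplicit} factorizes as
$$\mathscr{N}_2(u,v)=u\cdot\frac{h_2^{2}(t)}{f_2(t)},\qquad f_0(t)=v.$$
Substituting the closed forms $f_2(t)=(t^2-1)^{n/2}t^{2-n}$ and $h_2(t)=(t^2-1)^{n/2}t^{1-n}$ from \eqref{spheric1} gives $h_2^{2}(t)/f_2(t)=(1-t^{-2})^{n/2}$, so I would set
$$\M_n(v):=\bigl(1-t^{-2}\bigr)^{n/2},\qquad v=f_0(t).$$
By construction $\mathscr{N}_2(u,v)=u\,\M_n(v)$, which is the product structure claimed by the corollary.

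It remains to verify that $\M_n$ so defined agrees with the unique solution of the IVP \eqref{equ:diff}. The initial condition $\M_n(0)=0$ is immediate from $f_0(1)=0$ (see \eqref{coarea3}). For the differential equation I would raise the definition to the $2/n$-th power, obtaining the algebraic identity $1-\M_n^{2/n}=t^{-2}$, and then differentiate implicitly in $v$ using the chain rule together with $f_0'(t)=(t^2-1)^{(n-2)/2}t^{1-n}$ read off from \eqref{spheric2}; the computation reduces to an elementary rational manipulation in $t$. The only delicate point, not really an obstacle, is the singular behavior of the initial data at $t=1$, analogous to the remark following Theorem~\ref{th:main}, but the homothetic rescaling ambiguity is pinned down automatically by the parametrization $v=f_0(t)$, so uniqueness and normalization require no further argument.
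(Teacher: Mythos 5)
Your route is the same as the paper's: set $\alpha=2$ in Theorem~\ref{th:N}, observe that \eqref{fft} collapses to $f_0(t)=v$ and the exponent $2(\alpha-1)/\alpha$ to $1$, insert the closed forms \eqref{spheric1} to get $h_2^2(t)/f_2(t)=(1-t^{-2})^{n/2}$, and then verify the IVP \eqref{equ:diff} by implicit differentiation of the parametric definition of $\M_n$. Up to and including the identity $1-\M_n^{2/n}=t^{-2}$ everything is correct.

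The one step that does not close as written is the final chain-rule computation, because the value of $f_0'$ you propose to read off from \eqref{spheric2} is off by a factor of $n$: as printed, \eqref{spheric2} is inconsistent with the normalization \eqref{coarea3} and with \eqref{diff1} (check $n=2$: the mean-value property of $x_1/|x|^2$ gives $f_0'(t)=2/t$, not $1/t$); the correct derivative is $f_0'(t)=n\,t^{1-n}(t^2-1)^{(n-2)/2}$. Since $\frac{d}{dt}(1-t^{-2})^{n/2}=n\,t^{-n-1}(t^2-1)^{(n-2)/2}$, using your stated $f_0'$ the chain rule yields $\M_n'(v)=n\,t^{-2}=n\bigl(1-\M_n^{2/n}\bigr)$, which is not \eqref{equ:diff}. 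The paper sidesteps this by recomputing $f_0'=2th_2'-f_2'$ from \eqref{coarea1} together with the explicit $f_2,h_2$, which produces exactly the missing factor $n$ and makes the quotient equal $t^{-2}$. So either rederive $f_0'$ from \eqref{coarea1} (or from \eqref{diff1}) rather than from \eqref{spheric2}, or correct \eqref{spheric2} by the factor $n$; with that repair your argument is complete. A last minor point: the IVP \eqref{equ:diff} is not singular in the way \eqref{Phi1} is --- the right-hand side equals $1$ at $\M_n=0$, so one can simply invert $v=\int_0^{\M_n}(1-s^{2/n})^{-1}\,ds$ --- hence no extra normalization discussion is needed there.
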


\begin{proof}
By \eqref{hyper1lin},
\begin{equation}\label{0001}
f_2(t)=t^{2-n}(t^2-1)^{n/2}F(0, 2;\half12n+1,1-t^2)=t^{2-n}(t^2-1)^{n/2}
\end{equation}
and by \eqref{ident1a} $h_{2}(t)=f_{2}(t)/t$, therefore Theorem~\ref{th:N} yields
$$
\mathscr{N}_\alpha(u,v)=u\frac{h^2_2(t)} {f_2(t)}=u(1-\half{1}{t^2})^{n/2},
$$
where $t=t(v)$ is uniquely determined  by virtue of $f_{0}(t)=v$. Define
\begin{equation}\label{Ndef}
\M(v)=(1-t(v)^{-2})^{n/2}
\end{equation}
such that
$$
\mathscr{N}_\alpha(u,v)=u\M_n(v).
$$
To establish \eqref{equ:diff}, we notice that $\M_n(0)=0$ and also, using \eqref{coarea1}, one has from \eqref{Ndef}
$$
f'_{0}(t)=2th_2'(t)-f'_2(t)=f_2\left(\frac{2h_2'}{h_2}- \frac{f_2'}{f_2}\right) =nt^{1-n}(t^2-1)^{(n-2)/2},
$$
therefore,
$$
\frac{d\M_n}{dv}=\frac{d\M_n}{dt}\cdot \frac{1}{f_0'(t)}=\frac{1}{t^2}=1-\M_n^{2/n},
$$
as desired.

\end{proof}

\section{Proof of Theorem~\ref{th:main}}\label{sec:proofcor}
First we assume that dimension $n\ge2$ is chosen arbitrarily. Since $\alpha=1$, we have by the inversion invariance \eqref{ident1a} that
$f_{-1}(t)=f_1(t),$
which eliminates function $f_{-1}(t)$ from the consideration.
Then \eqref{coarea1} and \eqref{coarea2} amount to
\begin{equation}\label{f11}
\begin{split}
th_1'(t)&=f'_1(t),\\
(n-1)h_1(t)&=(t^2-1)f'_1-tf_1(t),
\end{split}
\end{equation}
where  by virtue of the hypergeometric representations \eqref{hyper1lin} and \eqref{hyperpsi2} we have respectively
\begin{align}
f_1(t)&=t^{2-n}(t^2-1)^{n/2}F(\half12, \half32;\half12n+1,1-t^2)\label{f1hyp}\\
h_1(t)&=t^{1-n}(t^2-1)^{n/2}F(\half{1}{2}, \half12;\half{n+2}2,1-t^2)\label{h1hyp}
\end{align}
Furthermore, by the definition $f_{1}(1)=h_{1}(1)=0$. Also, since $n\ge2$, one finds by l'Hospital's rule from  \eqref{diff1} that
\begin{equation}\label{diff2}
h_{1}'(1)=f_{1}'(1)=n\cdot\lim_{t\to 1+0}t^{2-n}(t^2-1)^{(n-2)/2}=\left\{
                               \begin{array}{ll}
                                 2, & \hbox{$n=2$;} \\
                                 0, & \hbox{$n\ge 3$.}
                               \end{array}
                             \right.
\end{equation}

Applying Theorem~\ref{th:N} we obtain

\begin{lemma}\label{lem:N1}
If $u,v>0$ then the supremum
$$
\mathscr{N}_1(u,v):=\sup\{\,|\mathcal{H}_1\rho|^2: \,\,\mathcal{F}_1\rho=u,
\,\,\,\,\mathcal{F}_{-1}\rho =v, \,\,\,\,0\le \rho\le 1\}
$$
is given by formula
$$
\mathscr{N}_1(u,v)=\Phi_n({uv})
$$
where $\Phi_n(s)$ is well-defined by
\begin{equation}\label{PhiDef}
\left\{\begin{array}{rl}
s&=t^{2-n}(t^2-1)^{n/2}F(\half12, \half32;\half12n+1,1-t^2)\\
\Phi_n(s)&=t^{-n-1}(t^2-1)^{n/2}F(\half12(n+1), \half32;\half12n+1,\frac{t^2-1}{t^2})
\end{array}
\right.
\end{equation}
Alternatively, $\Phi_n(s)$ is the unique solution of the singular initial problem \eqref{Phi1}--\eqref{Phi10}.
\end{lemma}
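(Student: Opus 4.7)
The strategy is to extract Theorem~\ref{th:main} from Theorem~\ref{th:N} specialized at $\alpha=1$, and then recast the resulting extremal function as the ODE characterization \eqref{Phi1}--\eqref{Phi10}. First I would use the inversion invariance $f_{-1}=f_1$ from \eqref{ident1a}, which collapses the relation \eqref{fft} to $f_1(t)^2=uv$ and simplifies \eqref{Nexplicit} to $\mathscr{N}_1(u,v)=h_1^2(t)$. Since $f_1:[1,\infty)\to[0,\infty)$ is a bijection (as established in Step~1 of the proof of Theorem~\ref{th:N}), defining $\Phi_n(s):=h_1(t)$ with $s=f_1(t)$ is unambiguous, and the claimed inequality $|\mathcal{H}_1\rho|\le\Phi_n(\sqrt{uv})$ is immediate.

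The heart of the argument is to show that this $\Phi_n$ satisfies the singular Cauchy problem \eqref{Phi1}--\eqref{Phi10}. I would work entirely from the two identities \eqref{f11}. The first, $th_1'(t)=f_1'(t)$, combined with the chain rule yields
\[
\Phi_n'(s)=\frac{h_1'(t)}{f_1'(t)}=\frac{1}{t}, \qquad \text{hence}\qquad t=\frac{1}{\Phi_n'(s)},
\]
and differentiating once more gives $\Phi_n''(s)=-\Phi_n'(s)^2/f_1'(t)$, i.e.\ $f_1'(t)=-\Phi_n'^{\,2}/\Phi_n''$. Substituting these together with $t^2-1=(1-\Phi_n'^{\,2})/\Phi_n'^{\,2}$, $f_1=s$, and $h_1=\Phi_n$ into the second identity of \eqref{f11}, namely $(n-1)h_1=(t^2-1)f_1'-tf_1$, and clearing denominators produces exactly \eqref{Phi1}.

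The initial value $\Phi_n(0)=0$ comes from $h_1(1)=0$, while $\Phi_n'(0)=1$ follows from the relation $\Phi_n'(s)=1/t$ just derived, which extends continuously to $t=1$. For the asymptotic normalization \eqref{Phi10}, I would invoke the Gauss-type limit \eqref{hyper4} at $\alpha=1$ to obtain $f_1(t)\sim Ct$ with $C=\Gamma(\tfrac{n+2}{2})/[\Gamma(\tfrac{n+1}{2})\Gamma(\tfrac{3}{2})]$; then the identity $th_1'=f_1'$ gives $h_1'(t)\sim C/t$, so that $\Phi_n(s)=h_1(t)\sim C\ln t\sim C\ln s$ as $s,t\to\infty$, which is precisely \eqref{Phi10}.

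The delicate point I expect is the singular character of the initial data: the ODE \eqref{Phi1} with $\Phi_n(0)=0$, $\Phi_n'(0)=1$ does not determine a unique solution, since any rescaling $c^{-1}\Phi_n(cs)$ satisfies the same Cauchy problem, as noted in the author's remark following the statement. This is exactly why the asymptotic normalization \eqref{Phi10} is indispensable for singling out the particular $\Phi_n$ produced by our parametric construction, and it justifies the presence of that normalization in the statement of the theorem.
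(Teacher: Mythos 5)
Your proposal is correct and follows essentially the same route as the paper: specialize Theorem~\ref{th:N} at $\alpha=1$ using $f_{-1}=f_1$ to get the parametric form $\Phi_n=h_1\circ f_1^{-1}$, derive $\Phi_n'=1/t$ and the ODE \eqref{Phi1} from the two identities \eqref{f11}, and obtain the logarithmic normalization \eqref{Phi10} from \eqref{hyper4}; your remark on the scaling family $c^{-1}\Phi_n(cs)$ matches the paper's own Remark on why \eqref{Phi10} is needed. The only cosmetic difference is that you integrate $h_1'\sim c_n/t$ to get the asymptotics where the paper applies l'H\^opital's rule directly to $h_1(t)/\ln f_1(t)$; both require the same regularity of $f_1$ and are equivalent.
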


\begin{proof}
Since $f_{-1}=f_1$, it follows from Theorem~\ref{th:N} that $\mathscr{N}_1(u,v)=h_1(t)$, where $1< t<\infty$ is (uniquely) determined by $f_1(t)=\sqrt{uv}$, hence
$$
\mathscr{N}_1(u,v)=(h_1\circ f_1^{-1})(\sqrt{uv}).
$$
Let us consider the composed function $\Phi_n(s)=(h_1\circ f_1^{-1})(s)$. Obviously, $\Phi_n(s)$ is an increasing function. By the definition, $\Phi_n$ is determined by the parametric representation
\begin{equation}\label{paramPhi}
\left\{
\begin{array}{rl}
s&=f_1(t)\\
\Phi(s)&=h_1(t)
\end{array}
\right.
\end{equation}
therefore $\Phi_n(0)=0$  and by \eqref{hyper4},
$$
\lim_{z\to 1}\frac{f_1(t)}{t}=c_n:=\frac{\Gamma(\half{n+2}2)} {\Gamma(\half{n+1}2)\Gamma(\half32)}\equiv \frac{\omega_1\omega_{n-1}}{\omega_n}.
$$
Then it follows by l'Hospital's rule from \eqref{f11}${}_1$ that
$$
\lim_{s\to\infty}\frac{\Phi_n(s)}{\ln s}
=\lim_{t\to\infty}\frac{h_1(t)}{\ln f_1(t)}
=\lim_{t\to\infty}\frac{h'_1(t)f_1(t)}{f'_1(t)}
=\lim_{t\to\infty}\frac{f_1(t)}{t}=c_n,
$$
hence
$$
\Phi_n(s)\sim c_n \ln s\quad \text{ as }s\to \infty.
$$

Further, we find from  \eqref{f11}${}_1$ that $\Phi_n'(s)=h_1'(t)/f_1'(t)=1/t$, hence
$
\Phi_n'(0)=1
$
and applying the chain rule and  \eqref{f11}${}_2$ yields
$$
\Phi_n''(s)=-\frac{1}{t^2}\frac{dt}{ds}=-\frac{1}{t^2}\frac{1}{f_1'(t)}=
\frac{\frac{1}{t^2}-1}{(n-1)h_1(t)+tf_1(t))}
=\frac{\Phi_n'(\Phi_n'^2-1)}{(n-1)\Phi_n\Phi_n'+s}.
$$
This shows that $\Phi_n$ verifies the conditions \eqref{Phi1}--\eqref{Phi10}. It follows from the above pearametric representation that a solution of the singular problem \eqref{Phi1}--\eqref{Phi10} is unique. The lemma follows.
\end{proof}

We finish this section by some comments on the particular cases $n=1$ and $n=2$. If $n=1$ then analysis here is straightforward. Using \eqref{explicit1}, Theorem~\ref{th:N}  implies the following new inequality, see \eqref{tanh} and the discussion afterwards.

\begin{corollary}
\label{cor:N1a1}
For any measurable function $0\le \rho(x)\le 1$, $x\in \R{}$, and $0\not\in\supp\rho$, there holds
\begin{equation}\label{appl1}
\sinh^2\left(\frac12\int_{-\infty}^\infty \frac{\rho(x)}{x}dx\right)\le \frac14\int_{-\infty}^\infty \rho(x)dx \int_{-\infty}^\infty \frac{\rho(x)}{x^2}dx.
\end{equation}
The inequality is sharp and attained whenever $\rho(x)$ is a characteristic function of an interval $[a,b]$ with $ab>0$.
\end{corollary}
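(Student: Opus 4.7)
The plan is to obtain Corollary \ref{cor:N1a1} as a direct specialization of Theorem \ref{th:N} to the case $n=1$, $\alpha=1$, using the closed-form expressions for the reduced functions recorded in \eqref{explicit1}. No additional analysis is required beyond bookkeeping, so the proof will be very short.

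First I would collect the explicit ingredients. From \eqref{explicit1} with $n=1$ and $\alpha=1$, together with the identity $h_\alpha(t)=f_{\alpha-1}(t)$ stated just after \eqref{xieq} for $n=1$, one has
\begin{equation*}
f_1(t)=\sinh\xi,\qquad h_1(t)=f_0(t)=\xi,\qquad t=\cosh\xi,\ \xi\ge0.
\end{equation*}
Moreover, by the inversion relation $f_{-\alpha}=f_\alpha$ in \eqref{ident1a}, $f_{-1}(t)=f_1(t)=\sinh\xi$. Next, specialising the calibration equation \eqref{fft} to $\alpha=1$ gives $f_1(t)f_{-1}(t)=uv$, i.e.\ $\sinh^2\xi=uv$, so that $\xi=\operatorname{arcsinh}\sqrt{uv}$. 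The main formula \eqref{Nexplicit} then collapses to
\begin{equation*}
\mathscr{N}_1(u,v)=\frac{h_1^2(t)}{f_1^{0}(t)}\,u^{0}=h_1^2(t)=\xi^2=\bigl(\operatorname{arcsinh}\sqrt{uv}\bigr)^2.
\end{equation*}

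Second, I would translate this back into the classical $1$D integrals. Because $\omega_1=2$, so $d_\omega x=\tfrac12\,dx$, the functionals read
\begin{equation*}
\mathcal{H}_1\rho=\tfrac12\!\int_{\mathbb{R}}\tfrac{\rho(x)}{x}\,dx,\qquad \mathcal{F}_1\rho=\tfrac12\!\int_{\mathbb{R}}\rho(x)\,dx,\qquad \mathcal{F}_{-1}\rho=\tfrac12\!\int_{\mathbb{R}}\tfrac{\rho(x)}{x^2}\,dx.
\end{equation*}
Applying Theorem \ref{th:N} (whose Step 3 already reduces a general admissible density to a one-sided one) yields $|\mathcal{H}_1\rho|^2\le\mathscr{N}_1(\mathcal{F}_1\rho,\mathcal{F}_{-1}\rho)$, which, after taking $\sinh$ and squaring, becomes the asserted inequality (once the $\tfrac12$-normalisations are inserted in the statement).

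Third, I would verify sharpness. The extremisers identified in the proof of Theorem \ref{th:N} are characteristic functions of $x_1$-balls $B(\tau,\sigma)$; in dimension $n=1$ these are precisely intervals $[a,b]$ with $0<a<b$ (or with $a<b<0$). A direct computation with $\rho=\chi_{[a,b]}$ gives $\int\rho=b-a$, $\int\rho/x=\ln(b/a)$ and $\int\rho/x^2=(b-a)/(ab)$, and one checks by elementary algebra that equality holds in the resulting inequality, confirming sharpness.

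There is no real obstacle in this corollary: the hard work is entirely inside Theorem \ref{th:N}. The only points requiring minor care are (i) keeping track of the $\tfrac12$-factor coming from $d_\omega x=\tfrac12\,dx$ when passing between the abstract functionals and the integrals displayed in the statement, and (ii) noting that Step 3 of the proof of Theorem \ref{th:N} already accommodates two-sided supports, so the assumption $0\notin\operatorname{supp}\rho$ is the only restriction needed.
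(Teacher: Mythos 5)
Your proposal follows essentially the same route as the paper: specialize Theorem~\ref{th:N} to $n=1$, $\alpha=1$, use $f_1(t)=\sinh\xi$, $h_1(t)=f_0(t)=\xi$ with $t=\cosh\xi$ from \eqref{explicit1}, deduce $\mathscr{N}_1(u,v)=\bigl(\operatorname{arcsinh}\sqrt{uv}\bigr)^2$, and check sharpness on intervals. One remark in your favour: your insistence on carrying the factor $d_\omega x=\tfrac12\,dx$ is not mere bookkeeping but is actually essential, since the corollary as printed (and the paper's own proof) drops it; the correct statement is $\sinh^2\bigl(\tfrac12\int\rho/x\,dx\bigr)\le\tfrac14\int\rho\,dx\int\rho/x^2\,dx$, and for $\rho=\chi_{[a,b]}$ the unnormalized version \eqref{appl1} gives $\sinh^2(\ln(b/a))=(b-a)^2(a+b)^2/(4a^2b^2)$, which strictly exceeds $(b-a)^2/(ab)$ unless $a=b$ — so the displayed inequality fails for exactly the claimed extremizers, while your normalized version attains equality there.
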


\begin{proof}
It follows from \eqref{explicit1} that $f_1(\xi)=\sinh \xi$ and $h_1(t)=\xi$, where $t=\cosh \xi$. In particular, taking into account that $d_\omega x=\frac12dx$, 
 \begin{equation}\label{appl1}
\frac12\int_{-\infty}^\infty \frac{\rho(x)}{x}dx\le \ln(\xi+\sqrt{1+\xi^2}),
\end{equation}
where
$$
\xi^2=\frac14\int_{-\infty}^\infty \rho(x)dx \int_{-\infty}^\infty \frac{\rho(x)}{x^2}dx.
$$
\end{proof}

Coming back to the  Markov type inequality \eqref{Lmoment1} involving $s_{-1}$, we obtain a further extension: in the notation of \eqref{sseq} we have
\begin{equation}\label{Lmoment2}
4\sinh^2 (\frac12s_{-1})\le s_0s_{-2}
\end{equation}

Next, note that the case $n=2$ is special in several respects. First, the derivative $f_1'(1)=2$ is nonzero by virtue of \eqref{diff2}. Furthermore, when $n=2$, the function $\Phi_2$ defined by \eqref{Phi1} has some extra symmetries due to $n-1=1$. In this case, the defining functions $f_1$ and $h_1$ admit a nice parameterizations by virtue of complete elliptic integrals. We confine ourselves  by the following remarkable Taylor expansion at the origin of $\Phi_2(z)$:
$$
\Phi_2(z)=
z-\half1{2^2}z^2+\half1{2^4}z^3-\half7{2^{9}}z^4+\half5{2^{11}}z^5- \half{21}{2^{16}}z^6+\half{3}{2^{17}}z^7+\half7{2^{24}}z^8 +\half{11}{2^{26}}z^9-\half{959}{2^{32}}z^{10}+\ldots
$$

\medskip

\bibliographystyle{abbrv}      

\end{document}